\newtheorem{theorem}{Theorem}[section]
\newtheorem{proposition}[theorem]{Proposition}
\theoremstyle{definition}
\newtheorem{definition}[theorem]{Definition}
\newtheorem{example}[theorem]{Example}
\theoremstyle{remark}
\newtheorem{remark}[theorem]{Remark}
\def\stirfir#1#2{\genfrac{[}{]}{0pt}{}{#1}{#2}}
\def\stirsec#1#2{\genfrac{\{}{\}}{0pt}{}{#1}{#2}}
\def\lah#1#2{\genfrac{\lfloor}{\rfloor}{0pt}{}{#1}{#2}}
\newcommand{\La}{{\mathbb L}}
\newcommand{\Pa}{{\mathcal P}}
\newcommand{\D}{{\mathcal D}}
\newcommand{\Z}{{\mathbb Z}}
\newcommand{\F}{{\mathbb F}}
\newcommand{\I}{{\mathbb I}}
\begin{document}

\title{Generalized Ordered Set Partitions}

%    Information for second author

\author{Be\'ata B\'enyi}
\address{\noindent Faculty of Water Sciences, National University of Public Service, Budapest, HUNGARY}
\email{beata.benyi@gmail.com}

\author{Miguel M\'{e}ndez}
\address{\noindent School of Mathematical and Computational Sciences, Yachay University, Urcuqui, ECUADOR}
\email{mmendez@yachaytech.edu.ec}

%    Information for second author
\author{Jos\'{e} L. Ramirez}
\address{Departamento de  Matem\'{a}ticas,
Universidad Nacional de Colombia, Bogot\'{a}, COLOMBIA}
\email{jlramirezr@unal.edu.co}
%

%    General info
\subjclass{05A18; 05A19; 05A15}

\date{\today}

\keywords{Incomplete Lah numbers; Ordered set partitions; Combinatorial identities; Generating functions.}

\begin{abstract}
In this paper, we consider ordered set partitions obtained by imposing conditions on the size of the lists, and such that the first $r$ elements are in distinct blocks, respectively. We introduce a generalization of the Lah numbers. For this new combinatorial sequence we derive its exponential generating function, some recurrence relations,  and combinatorial identities. We prove and present results using combinatorial arguments, generating functions, the symbolic method and Riordan arrays. For some specific cases we provide a combinatorial interpretation for the inverse matrix of the generalized Lah numbers by means of two families of posets. 
\end{abstract}

\maketitle

\newcommand{\N}{{\mathbb N}}
\newcommand{\realpart}{\mathop{\rm Re}\nolimits}
\newcommand{\imagpart}{\mathop{\rm Im}\nolimits}

\numberwithin{equation}{section}

\section{Introduction} \label{intro}
\setcounter{equation}{0}

The (unsigned) Lah numbers, denoted by $\lah{n}{k}$, enumerate the number of partitions of a set with $n$ elements into $k$ non-empty  ordered lists.  This sequence satisfies the following recurrence
$$\lah{n}{k}=\lah{n-1}{k-1}+(n+k-1)\lah{n-1}{k},$$
with the initial values $\lah{0}{0}=1$ and $\lah{n}{0}=\lah{0}{n}=0$ if $n\geq 1$.

They can be computed by the following explicit  formula (cf.\ \cite{Riordan})
\begin{align*}
\lah{n}{k}=\frac{n!}{k!}\binom{n-1}{k-1}.
\end{align*}
From the above equation it is possible to obtain the exponential generating function
$$\sum_{n\geq k}\lah{n}{k}\frac{x^n}{n!}=\frac{1}{k!}\left(\frac{x}{1-x}\right)^k.$$
The Lah numbers can also be defined  as the connecting coefficients between the rising and  falling factorial polynomials
\begin{align}\label{id1}
(x)^n=\sum_{k=0}^n\lah{n}{k}(x)_k, \quad  (n\geq 0),
\end{align}
 where $(x)^n=x(x+1)\cdots (x+n-1)$ and $(x)_n=x(x-1)\cdots (x-n+1)$, for $n\geq  1$, with the initial values  $(x)^0=1=(x)_0$.

The Lah numbers are related to Stirling numbers by the following orthogonality relation
\begin{equation}\label{orto}
\lah{n}{k}=\sum_{j=k}^n\stirfir{n}{j}\stirsec{j}{k}, \qquad 0 \leq k \leq n,
\end{equation}
where $\stirfir{n}{m}$ and $\stirsec{n}{m}$ are the Stirling numbers of the first and second kind, respectively.

Let us introduce the sequence $L(n)$ as the total number of partitions of $[n]$
into ordered lists  (also called fragmented permutations \cite{Flajolet}) so that
 $$L(n):=\sum_{k=0}^n\lah{n}{k}.$$
The first few terms are
$$1,\, 1,\, 3,\, 13,\, 73,\, 501,\, 4051,\, 37633,\, 394353, \, 4596553, \, 5894109, \dots $$
For example, $L(3) = 13$, the ordered lists being
  \begin{align*}
 & \left\{ \{1 \}, \{ 2 \}, \{ 3 \} \right\}, & & \left\{ \{ 1, \, 2 \}, \{ 3 \}  \right\}, & & \left\{ \{ 2, \, 1 \}, \{ 3 \}  \right\}, &  &  \left\{ \{ 1, \, 3 \}, \{ 2 \}  \right\}, & & \left\{ \{ 3, \, 1 \}, \{ 2 \}  \right\}, & \\
  & \left\{  \{ 1 \},  \{ 2, \, 3 \}\right\}, & &   \left\{  \{ 1 \},  \{ 3, \, 2 \}\right\}, &  & \left\{ \{ 1, \, 2, \, 3 \} \right\}, & & \left\{ \{ 1, \, 3, \, 2 \} \right\}, & &  \left\{ \{ 2, \, 1, \, 3 \} \right\}, & \\
  &  \left\{ \{ 2, \, 3, \, 1 \} \right\},  & &  \left\{ \{ 3, \, 1, \, 2 \} \right\},  &&  \left\{ \{ 3, \, 2, \, 1 \} \right\}. &
  \end{align*}
The exponential generating function is given by
$$\sum_{n=0}^\infty L(n)\frac{x^n}{n!}=e^{x/(1-x)},$$
and satisfies the recurrence relation (cf.\ \cite{MS})
\begin{align*}
L(n+1)=(2n+1)L(n)-(n^2-n)L(n-1), \quad n\geq 1 
\end{align*}
with the initial values $L(0)=L(1)=1$.
The asymptotic behavior of the sequence can be derived using the saddle point method (See \cite{Flajolet}, VIII. 7., p.562.):
$$
\frac{L(n)}{n!}\sim \frac{e^{-\frac{1}{2}}e^{2\sqrt{n}}}{2\sqrt{\pi}n^{\frac{3}{4}}}.
$$

In this paper, we study the number of partitions of $[n]$ into $k$ non-empty lists (ordered blocks) such that the size $s$ of each list belongs to a given set $S$. This sequence is called \emph{$S$-Lah numbers} (cf.\ \cite{Engbers-2016, mihoubi-2017a}). We use the Karamata-Knuth notation $\lah{n}{k}_S$ for this sequence. Notice that if $S=\Z^+=\{1,2,3,\ldots\}$ we recover the classical Lah numbers. If we take $S=\{1, 2,  \dots, m\}$, we obtain the \emph{restricted Lah numbers} \cite{ManSha}. In a similar way, if we take $S=\{m, m+1, \dots\}$, we have the \emph{associated Lah numbers}  introduced by Belbachir and Bousbaa \cite{Belbachir-2016}. Inspired by the well-known $r$-Stirling numbers introduced by Broder \cite{Broder}, we introduce $(S,r)$-Lah numbers, denoted by $\lah{n}{k}_{S,r}$, as the number of partitions of $[n + r]$ into $k + r$ non-empty ordered lists with the additional condition that the first $r$ elements are in distinct blocks. It is clear that $\lah{n}{k}_{S,0}=\lah{n}{k}_{S}$.\\

The outline of the paper is as follows. First, we investigate $S$-Lah numbers, derive the generating function by the symbolic method and further identities using combinatorial arguments. In the next sections we study the $(S,r)$-Lah numbers. In Section 3 we use classical combinatorial arguments and the symbolic method, in Section 4 we use the theory of Riordan arrays for the study of the $(S,r)$-Lah matrix and its inverse. In Section 5 we provide a new combinatorial interpretation of the $(S,r)$-Lah numbers (that involves also the $S$-Lah numbers), and define a partial order on the underlying set such that the M\"obius cardinal is given by the entries of the inverse of the $(S,r)$-Lah matrix. In Section 6 we complete our study with the introduction and characterization of $(S,r)$-Fubini numbers, which count the lists of  blocks with the extra conditions on the sizes and on the elements $1,\ldots, r$ (using matrix theory, combinatorial arguments and the symbolic method). Finally, we show some results on the number of doubly ordered partitions, ``lists of lists'', with our conditions on the size of the lists and  on the elements $[r]$.

\section{$S$-Lah numbers}
Restrictions and generalizations of Stirling numbers of the second and first kind were studied recently by many authors, but these versions of Lah numbers have received less attention yet. Engbers  et al. \cite{Engbers-2016} introduced the \emph{$S$-Lah numbers} as the number of partitions of $[n]$ into $k$ non-empty lists (ordered blocks) such that the size $s$ of each list belongs to a given set $S$. For further applications of this sequence see \cite{BenyiRamirez}.

For the sake of completeness, we mention here another combinatorial interpretation of the $S$-Lah numbers in terms of Dyck paths. In particular, Callan \cite{Callan} showed that the Lah number $\lah{n}{k}$ counts $n$-Dyck paths with $n+1-k$ labelled peaks. According to Callan's bijection, the $S$-restricted Lah numbers counts the $n$-Dyck paths with $n+1-k$ labelled peaks, such that the length of any sequence of  consecutive peaks is contained in $S$. A \emph{peak} of a Dyck path is an up step followed directly by a down step: $(UD)$, and a sequence of consecutive peaks is a sequence of up-down step pairs $UDUDUDUD=(UD)^4$, while the length of such a $(UD)^m$ is $m$.

The exponential generating function can immediately be obtained using the symbolic method \cite{Flajolet}. Let $S$ be a given set of integers. Then, the construction of a partition of lists of sizes containing in $S$ is
\[\mbox{SET}_k(\mbox{SEQ}_S(\mathcal{X})),\] where $\mbox{SET}_k(\mathcal{X})$ denotes $k$-sets of objects and $\mbox{SEQ}_S(\mathcal{X})$ denotes lists (sequences) of sizes belonging to the set $S$. The construction above directly translates to
$$
\sum_{n=k}^{\infty}{\lah{n}{k}_S}\frac{x^n}{n!} = \frac{1}{k!}\left({\sum_{s\in S}x^s}\right)^k.
$$

In particular, for $S$ being the odd ($\mathcal{O}$), respectively the even numbers $\mathcal{E}$, we have:
\begin{align*}
\sum_{n=k}^{\infty}{\lah{n}{k}_{\mathcal{O}}}\frac{x^n}{n!} &= \frac{1}{k!}\left(\frac{x}{1-x^2}\right)^k,\\
\sum_{n=k}^{\infty}{\lah{n}{k}_{\mathcal{E}}}\frac{x^n}{n!} &= \frac{1}{k!}\left(\frac{x^2}{1-x^2}\right)^k.
\end{align*}
The generating functions for the associated  \cite{Belbachir-2016}  and restricted Lah numbers are also immediate:
\begin{align*}
\sum_{n=mk}^{\infty}{\lah{n}{k}_{\geq{m}}}\frac{x^n}{n!} &= \frac{1}{k!}\left(\frac{x^m}{1-x}\right)^k,\\
\sum_{n=k}^{\infty}{\lah{n}{k}_{\leq{m}}}\frac{x^n}{n!} &= \frac{1}{k!}\left(\frac{x-x^{m+1}}{1-x}\right)^k.
\end{align*}
Similarly, if we do not care about the number of lists, the construction is modified as
\[\mbox{SET}(\mbox{SEQ}_S(\mathcal{X})).\]

Hence, for the $S$-restricted version of the sequence $L(n)$ we have:
$$
\sum_{n=0}^{\infty}{L_S(n)}\frac{x^n}{n!} = \exp\left({\sum_{s\in S}x^s}\right),
$$
where  $L_S(n)=\sum_{k=0}^n\lah{n}{k}_{S}$.

Note that the $S$-restricted Lah sequence is a  particular case of the partial Bell polynomials \cite{comtet-1974a}. Therefore, if $S=\{k_1, k_2, \dots \}$, then they are given by
$$\lah{n}{k}_S=\sum_{\substack{c_1k_1+c_2k_2+\cdots =n, \\ c_1+c_2+\cdots = k}}\frac{n!}{c_1!c_2!\cdots}.$$

Moreover, it is possible to generalize the Identity \eqref{id1} by means of the potential polynomials (\cite[Theorem B, pp. 141]{comtet-1974a}). Let $f_{S,t}(x)$ be the function defined by
$$f_{S,t}(x):=(1+H_S(x))^t,$$
where $H_S(x)=\sum_{s\in S} x^{s}.$
Then
 \begin{align*}
\left. \frac{d^n}{dx^n}f_{S,t}(x)\right |_{x=0}:=f_{S,t}^{(n)}(0)=\sum_{k=0}^n\lah{n}{k}_{S} (t)_k.
\end{align*}

For a given set $S$ of integers, let $C_S(n,k)$ denote the number of compositions of $n$ into exactly $k$ parts such that the size of each part is included in $S$.  Then we have the following relation
$$
\lah{n}{k}_S=\frac{n!}{k!}C_S(n,k).
$$
Clearly,
$$\lah{n}{k}_S=\sum_{i_1+i_2+\cdots +i_k=n\atop i_j\in S}\frac{(n)_{i_1}(n-i_1)_{i_2}(n-(i_1+i_2))_{i_3}\cdots (n-(i_1+\cdots+i_{k-1}))_{i_k}}{k!},$$
which reduces to the formula above.

For $S=\{m,m+1, \ldots \}$, we recover the formula of the associated Lah numbers given in \cite{Belbachir-2016} with $C_S(n,k)=\binom{n-(m-1)k-1}{k-1}$. In particular, the number of partitions $[n]$ into $k$ lists without singletons
is $\lah{n}{k}_{\geq 2}=\frac{n!}{k!}\binom{n-k-1}{k-1}$. Further, using the results on compositions of Heubach and Mansour \cite{Heubach-2004},
we have for the sets of odd, respectively for even integers the following formulas.
\begin{align*}
\lah{2n-k}{k}_{\mathcal{O}}=\frac{(2n-k)!}{k!}\binom{n-1}{k-1} \quad \text{and} 	\quad \lah{2n}{k}_{\mathcal{E}}=\frac{2n!}{k!}\binom{n-1}{k-1}.
\end{align*}
Hence, we have
$$
(2n)_{k}\lah{2n-k}{k}_{\mathcal{O}}=\lah{2n}{k}_{\mathcal{E}}.
$$

Next, we derive some recurrences for the $S$-Lah numbers.

\begin{theorem}
For integers $n,k$ and a given set of integers $S$, we have
$$
\lah{n}{k}_S=\sum_{s\in S}s(n-1)_{s-1}\lah{n-s}{k-1}_S.
$$
\end{theorem}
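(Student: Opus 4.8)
The plan is to give a direct combinatorial decomposition based on the list containing a distinguished element, and then to note that the same identity drops out of the exponential generating function by differentiation.

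First I would fix the element $n$ and classify every partition of $[n]$ counted by $\lah{n}{k}_S$ according to the list $B$ that contains $n$. Since every list has size in $S$, the size $s$ of $B$ ranges over $S$. Removing $B$ leaves the remaining $n-s$ elements partitioned into $k-1$ ordered lists whose sizes again lie in $S$, a residual object counted by $\lah{n-s}{k-1}_S$. Conversely, any choice of a size-$s$ list through $n$ together with such a residual partition reassembles a unique partition of the original type, so the correspondence is a bijection and we may sum over $s\in S$.

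Second, I would count in how many ways the distinguished list $B$ of size $s$ can be built. We select the other $s-1$ entries of $B$ from the $n-1$ elements of $[n]\setminus\{n\}$, giving $\binom{n-1}{s-1}$ choices, and then arrange all $s$ chosen elements into an ordered list, giving $s!$ orderings; hence there are $s!\binom{n-1}{s-1}$ admissible lists. The one point needing care is the bookkeeping identity
$$s!\binom{n-1}{s-1}=s\cdot\frac{(n-1)!}{(n-s)!}=s\,(n-1)_{s-1},$$
which rewrites the count in the falling-factorial form appearing in the statement. Multiplying by $\lah{n-s}{k-1}_S$ and summing over $s\in S$ yields the claimed recurrence.

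Alternatively, I would derive the same relation analytically. Writing $F_k(x)=\frac{1}{k!}H_S(x)^k$ with $H_S(x)=\sum_{s\in S}x^s$, so that the coefficient of $x^n/n!$ in $F_k(x)$ is $\lah{n}{k}_S$, differentiation gives $F_k'(x)=H_S'(x)F_{k-1}(x)$ where $H_S'(x)=\sum_{s\in S}sx^{s-1}$. Extracting the coefficient of $x^{n-1}/(n-1)!$ on both sides through the usual product rule for exponential generating functions reproduces $\sum_{s\in S}s!\binom{n-1}{s-1}\lah{n-s}{k-1}_S$, and the same falling-factorial simplification closes the argument. The main obstacle in either route is purely organizational: confirming that the list-removal map is a genuine bijection, and handling the arithmetic identity above correctly. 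I expect no deeper difficulty.
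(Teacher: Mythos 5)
Your first argument is exactly the paper's proof: classify partitions by the size $s$ of the list containing the element $n$, count that list via $s!\binom{n-1}{s-1}=s(n-1)_{s-1}$, and multiply by $\lah{n-s}{k-1}_S$ for the rest. The combinatorics and the arithmetic both check out, and the supplementary generating-function derivation is a correct (if redundant) confirmation.
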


\begin{proof}
Consider an $n$ element set partitioned into $k$ non-empty ordered blocks. Assume that the $n$th element is contained in a set of size $s\in S$. For this block we choose in $\binom{n-1}{s-1}$ ways the other elements, and order the elements in the block in $s!$ ways. The remaining elements can be partitioned in $\lah{n-s}{k-1}_S$ ways.
\end{proof}

We can derive a recursive formula for $\lah{n}{k}_S$, where we do not need to sum over the whole set $S$, which means a simplification for some set $S$. A set $S$ of integers is the disjoint union of sequences of consecutive integers: $S_i=\{i, i+1,\ldots, i+s_i-1\}$, for some $i$. The least and greatest elements of the sets $S_i$ play an important role; hence, we define $S^*$ to be the set of least elements, and $\overline{S}$ the set of greatest elements of the sets $S_i$. Further, we let $\widehat{S}$ denote the set of greatest elements that are not least elements: $\widehat{S}=S^*-\overline{S}$. Alternative definitions of $S^*$ and $\overline{S}$ are the following: $S^*:=\{s\in S|s-1\not\in S\}$ and $\overline{S}:=\{s\in S|s+1\not\in S\}$, respectively.
\begin{theorem} Given a set $S$, let $S^*$ and $\widehat{S}$ be sets as defined above. We have
$$
\lah{n}{k}_S=(n+k-1)\lah{n-1}{k}_{S}+\sum_{s\in S^*}\binom{n-1}{s-1}s!\lah{n-s}{k-1}_S -\sum_{s\in \widehat{S}}\binom{n-1}{s}(s+1)!\lah{n-s-1}{k-1}_S.
$$
\end{theorem}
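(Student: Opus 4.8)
The plan is to combine the recurrence of the previous theorem with the classical ``insert the largest element'' argument for Lah numbers, and then to telescope over the maximal runs of consecutive integers that constitute $S$.

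First I would fix convenient notation: for an integer $t$, let $A_t$ denote the number of partitions of $[n]$ into $k$ lists in which the block containing $n$ has size exactly $t$ while the other $k-1$ blocks all have sizes in $S$. Choosing and ordering the remaining members of the $n$-block and distributing the rest as an $S$-partition gives $A_t=\binom{n-1}{t-1}\,t!\,\lah{n-t}{k-1}_S$. With this notation the previous theorem is nothing but the decomposition $\lah{n}{k}_S=\sum_{s\in S}A_s$, obtained by classifying $S$-partitions according to the size $s\in S$ of the block containing $n$.

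Next I would prove the companion identity $(n+k-1)\lah{n-1}{k}_S=\sum_{s\in S}A_{s+1}$. The left side counts an $S$-partition of $[n-1]$ into $k$ lists together with one of its insertion slots; since a partition of $n-1$ elements into $k$ lists has exactly $(n-1)+k=n+k-1$ slots regardless of its shape, inserting $n$ enlarges some block of size $s\in S$ to size $s+1$ and sets up a bijection with the partitions enumerated by $A_{s+1}$ (deleting $n$ is the inverse, and $s\ge 1$ guarantees no block is emptied). Subtracting the two displays then yields $\lah{n}{k}_S-(n+k-1)\lah{n-1}{k}_S=\sum_{s\in S}(A_s-A_{s+1})$.

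Finally I would use the run structure of $S$. On each maximal run $\{i,i+1,\dots,j\}\subseteq S$ the sum $\sum_{s=i}^{j}(A_s-A_{s+1})$ telescopes to $A_i-A_{j+1}$, so summing over the runs leaves exactly the least element $i\in S^{*}$ with a positive sign and the shifted greatest element, indexed by $j\in\overline S$, with a negative sign. Re-substituting $A_s=\binom{n-1}{s-1}s!\lah{n-s}{k-1}_S$ and $A_{s+1}=\binom{n-1}{s}(s+1)!\lah{n-s-1}{k-1}_S$ then produces the asserted recurrence. I expect the delicate point, and hence the main obstacle, to be the bookkeeping of which boundary terms survive the telescoping: for a singleton run $\{i\}$ the same index $i$ contributes both the positive term $A_i$ and the negative term $A_{i+1}$, so such runs must still be carried in the negative sum; getting this accounting right (rather than the routine insertion and telescoping steps) is where the care is required.
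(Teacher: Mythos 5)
Your proof is correct and is essentially the paper's own argument in a different arrangement: the paper also counts insertions of the element $n$ into the $n+k-1$ slots of an $S$-partition of $[n-1]$, then adds back the missed partitions (those whose $n$-block has size in $S^*$) and subtracts the illegal ones (those whose $n$-block has size $s+1$ with $s$ the top of a run), which is exactly what your identity $\lah{n}{k}_S-(n+k-1)\lah{n-1}{k}_S=\sum_{s\in S}(A_s-A_{s+1})$ together with the telescoping over maximal runs encodes. Your closing observation about singleton runs is the one substantive point and it is right: the negative sum must range over all of $\overline{S}$ (every greatest element of a maximal run, including those that are simultaneously least elements), whereas the paper's $\widehat{S}$ as defined excludes precisely those; for instance with $S=\{1,3\}$, $n=3$, $k=2$ the formula as printed gives $4$ while $\lah{3}{2}_{\{1,3\}}=0$, the discrepancy being exactly the omitted term $-\binom{2}{1}2!\,\lah{1}{1}_{\{1,3\}}=-4$ contributed by the singleton run $\{1\}$. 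So the statement should be read with $\widehat{S}$ replaced by $\overline{S}$, and your argument proves that corrected version.
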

\begin{proof}
The left hand side counts the partitions of $n$ elements into lists such that each list has size included in $S$.
Consider the partition into lists of $n-1$ elements. We insert the $n$th element before each element or as a last element of any list. (This can be done in $(n+k-1)\lah{n-1}{k}_S$ ways.) But we do not obtain all the partitions this way, since the partitions in which the $n$th element is in a list of size $s$ with $s\in S^*$ are missing. $\sum_{s\in S^*}\binom{n-1}{s-1}s!\lah{n-s}{k-1}_S$ counts the number of such partitions. Moreover, we obtained by the insertion partitions of $n$ for that not every list has the required size. This happens, if the $n$th element is inserted into a list of size $s$, where $s\in \widehat{S}$. Since the number of such partitions is $\sum_{s\in \widehat{S}}\binom{n-1}{s}(s+1)!\lah{n-s-1}{k-1}_S$, we need to reduce our sum by this number.
\end{proof}
For $S=\Z^+$, the formula is the well-known recurrence of the Lah numbers. For $S=\{s, s+1, s+2, \ldots\}$, we recover the recurrence for the associated Lah numbers \cite{Belbachir-2016}:
$$
\lah{n}{k}_{\geq s}=(n+k-1)\lah{n-1}{k}_{\geq s}+\binom{n-1}{s-1}s!\lah{n-s}{k-1}_{\geq s}.
$$
Setting $S=\{1,2,\ldots, s\}$, we obtain the recurrence relation for  the restricted Lah numbers
$$\lah{n}{k}_{\leq s}=(n+k-1)\lah{n-1}{k}_{\leq s}+\lah{n-1}{k-1}_{\leq s}-\binom{n-1}{s}(s+1)!\lah{n-s-1}{k-1}_{\leq s}.
$$
But it is also immediate to give the recurrence for the number of partitions into lists of $n$ element that do not contain lists of a given size, say $p(\not=1)$. This means namely that $S=S_{\overline{p}}=\Z^+\setminus\{p\}$; hence, $S_{\overline{p}}^*=\{p+1\}$ and $\widehat{S_{\overline{p}}}=\{p-1\}$. We have
$$
\lah{n}{k}_{S_{\overline{p}}}=(n+k-1)\lah{n-1}{k}_{S_{\overline{p}}}+\binom{n-1}{p}(p+1)!\lah{n-p-1}{k-1}_{S_{\overline{p}}}-
\binom{n-1}{p-1}p!\lah{n-p+1}{k-1}_{S_{\overline{p}}}.
$$

\section{The $r$-Version}\label{rversion1}
Now, we turn our attention to the $S$-restricted case of $r$-Lah numbers.  
Given a set of positive integers $S$, let $\lah{n}{k}_{S,r}$ denote the number of partitions of $n+r$ elements into $k+r$ lists such that the size of each list is contained in $S$ and the first $r$ elements are contained in distinct lists. We call the first $r$ elements \emph{special elements} and a list that contains a special element \emph{special list}. This generalization is inspired by the well-known $r$-Stirling numbers introduced by Broder \cite{Broder}.  For $S=\Z^*$ we obtain the $r$-Lah numbers studied recently by several authors, \cite{Nyul-2015, Belbachir-2013, Belbachir-2014}, and for $S=\{1,2,\ldots, n\}$ we obtain the restricted $r$-Lah numbers that were introduced by Shattuck in \cite{Shattuck}.

Theorem \ref{teo41} provides the relation between the $S$-Lah numbers and $(S,r)$-Lah numbers.
\begin{theorem}\label{teo41}
Let $n,k\geq 1$ and $n>r$ be integers, and $S$ a set of integers. We have the combinatorial identity
\[\lah{n}{k}_{S,r}= \sum_{m=0}^{n-k}\binom{n}{m}\sum_{i_1+\cdots+i_r=m\atop i_j+1\in S}m! (i_1+1)\cdots (i_r+1)\lah{n-m}{k}_S.\]
\end{theorem}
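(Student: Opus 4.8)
The plan is to give a direct double-counting argument that refines the count $\lah{n}{k}_{S,r}$ according to how many of the $n$ non-special elements land in the special lists. Recall that $\lah{n}{k}_{S,r}$ counts partitions of $[n+r]$ into $r$ special lists (each carrying exactly one of the special elements $1,\dots,r$) together with $k$ ordinary lists, every list having its size in $S$. First I would fix the number $m$ of the $n$ non-special elements placed into special lists. The remaining $n-m$ non-special elements must then fill the $k$ ordinary lists, and since every list has size at least $1$ (as $S\subseteq\Z^+$) we need $n-m\ge k$; this accounts for the summation range $0\le m\le n-k$.

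For a fixed $m$ the construction factors into two independent pieces. First, select which $m$ of the $n$ non-special elements will go into special lists, which can be done in $\binom{n}{m}$ ways. Second, the remaining $n-m$ non-special elements are partitioned into $k$ ordinary ordered lists whose sizes all lie in $S$; by the very definition of the $S$-Lah numbers this can be done in $\lah{n-m}{k}_S$ ways, and this is where the $S$-Lah factor enters the formula.

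The heart of the argument is the enumeration of the special part. Suppose the list carrying special element $j$ receives $i_j$ of the chosen non-special elements, so that $i_1+\cdots+i_r=m$; since that list also holds its special element, its total size is $i_j+1$, and admissibility forces $i_j+1\in S$. Distributing the $m$ elements among the lists and ordering each list, I would count sequentially: pick and arrange the list of special element $1$ in $\binom{m}{i_1}(i_1+1)!$ ways, the list of special element $2$ in $\binom{m-i_1}{i_2}(i_2+1)!$ ways, and so on. The product of binomials telescopes, giving
\[
\prod_{j=1}^r \binom{m-(i_1+\cdots+i_{j-1})}{i_j}(i_j+1)!
=\frac{m!}{i_1!\cdots i_r!}\prod_{j=1}^r (i_j+1)!
= m!\,(i_1+1)\cdots(i_r+1).
\]
Multiplying the three factors $\binom{n}{m}$, the special-part count, and $\lah{n-m}{k}_S$, then summing over all compositions $i_1+\cdots+i_r=m$ with $i_j+1\in S$ and over $m$, reproduces exactly the claimed identity.

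The only step that requires real care is the simplification $\binom{m}{i_1,\dots,i_r}\prod_j(i_j+1)! = m!\prod_j(i_j+1)$, i.e.\ recognizing that the multinomial coefficient cancels every factorial $i_j!$ and leaves precisely the product of the factors $(i_j+1)$; the remainder is a clean product rule for independent choices, together with keeping the special lists distinguishable (the list of element $j$ is pinned down by its special element, so ordered compositions rather than partitions are the right objects). As consistency checks I would confirm the boundary cases: for $r=0$ the inner sum forces $m=0$ and the identity collapses to $\lah{n}{k}_{S,0}=\lah{n}{k}_S$, while for $S=\Z^+$ it specializes to the corresponding relation between the $r$-Lah and ordinary Lah numbers.
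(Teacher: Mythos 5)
Your proposal is correct and follows essentially the same double-counting argument as the paper: condition on the number $m$ of non-special elements placed in special lists and on the composition $(i_1,\dots,i_r)$, build the special lists and the ordinary lists independently, and simplify the multinomial-times-factorials product to $\binom{n}{m}m!(i_1+1)\cdots(i_r+1)$. The only cosmetic difference is that you first choose the $m$ elements and then distribute them, whereas the paper uses a single multinomial coefficient $\binom{n}{i_1,\dots,i_r,n-m}$ over all $n$ elements at once; these are identical counts.
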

\begin{proof}
Let $i_j$ be the number of non-special elements that are contained in the list of the special element $j$. Further, let $m$ be the sum of $i_j$, i.e., the number of non-special elements that are contained in any of the special lists. Fix $m$, and construct the special lists and the non-special lists separately.
$$
\lah{n}{k}_{S,r}=\sum_{m=0}^{n-k}\sum_{i_1+\cdots+i_r=m\atop i_j+1\in S} (i_1+1)!\cdots(i_r+1)!\binom{n}{i_1,i_2,\ldots,i_r,n-m}\lah{n-m}{k}_S.
$$
After simplification we get the above formula.
\end{proof}

\begin{theorem}
Let $n,k\geq 1$ and $n>r$ be integers, and $S$ a set of integers. The $(S,r)$-Lah numbers satisfy the following recurrence relation
$$
\lah{n+1}{k}_{S,r}=\lah{n}{k-1}_{S,r+1}+r\sum_{s\in S}s!\binom{n}{s-2}\lah{n-s+2}{k}_{S,r-1}.
$$
\end{theorem}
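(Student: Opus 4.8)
The plan is to give a double-counting argument organized by the location of a single distinguished element. The left-hand side $\lah{n+1}{k}_{S,r}$ counts partitions of the $n+r+1$ elements $\{1,\dots,r,r+1,\dots,r+n+1\}$ into $k+r$ lists whose sizes all lie in $S$, with the special elements $1,\dots,r$ in distinct lists. I would single out the last (non-special) element $r+n+1$ and split the count according to whether the list containing it is \emph{special} (holds one of $1,\dots,r$) or \emph{non-special} (holds none). Because the special elements occupy distinct lists, every list contains at most one of them, so these two cases are exhaustive and disjoint; they should produce the two terms on the right.

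For the non-special case I would set up a bijection with the configurations counted by $\lah{n}{k-1}_{S,r+1}$ by ``promoting'' $r+n+1$ to special status. Its list, previously non-special, becomes special while retaining the same members and the same admissible size; since that list contained no special element before, $r+n+1$ is its unique special element and the distinctness condition is preserved. This converts $r$ special and $k$ non-special lists into $r+1$ special and $k-1$ non-special lists, i.e.\ a partition of $n+r+1$ elements into $k+r$ lists with $r+1$ special elements. The inverse map demotes the $(r+1)$-st special element; one checks it always lands back in the non-special case, so after relabeling the special set this gives exactly $\lah{n}{k-1}_{S,r+1}$.

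For the special case I would count directly. Choose the special element sharing the list of $r+n+1$ ($r$ ways); let that common list have size $s\in S$. Then pick the remaining $s-2$ members from the $n$ other non-special elements ($\binom{n}{s-2}$ ways) and linearly order the whole list ($s!$ ways). Deleting this completed special list leaves $r-1$ special elements, $n-s+2$ non-special elements, and $k+r-1$ lists, of which $r-1$ are special, so the remainder is counted by $\lah{n-s+2}{k}_{S,r-1}$. Summing over $s\in S$ yields $r\sum_{s\in S}s!\binom{n}{s-2}\lah{n-s+2}{k}_{S,r-1}$, with the degenerate sizes $s\le 1$ contributing nothing since $\binom{n}{s-2}=0$ there.

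I expect the main obstacle to be the bookkeeping in the promotion bijection, namely verifying that it is genuinely onto all configurations counted by $\lah{n}{k-1}_{S,r+1}$ and that the distinctness-of-special-elements constraint is respected in both directions, rather than anything in the direct enumeration of the special case. A secondary point to confirm is that the case split is truly exhaustive, which rests precisely on the hypothesis that each list contains at most one special element.
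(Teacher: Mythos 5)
Your proposal is correct and follows essentially the same argument as the paper: both split on whether the list containing the distinguished non-special element is special or not, handle the first case by promoting that element to special status (yielding $\lah{n}{k-1}_{S,r+1}$), and enumerate the second case directly by choosing the special companion, the $s-2$ further members, and the ordering. Your extra care in checking that the promotion map is a bijection is a welcome elaboration of a step the paper treats as immediate, but it is not a different route.
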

\begin{proof}
Assume that the $(n+1)$-th (non-special) element is in a list that does not contain any of the $r$ special elements. Then, we can consider $(n+1)$ as an extra special element; hence, the number of such partitions is $\lah{n}{k-1}_{S,r+1}$ by definition. Assume now that the $(n+1)$-th element is contained in one of the $r$ special lists. Then, first we choose the special list in $r$ ways, then we choose $s-2$ elements out of $n$.  We permute now these $s$ elements; $s-2$ non-special elements, the special element, and ($n+1$), in order to obtain the list that contains the element $(n+1)$. The remaining $n-(s-2)+(r-1)$ elements construct a $(S,r-1)$-partition into $k$ non-empty lists, which is counted by definition by $\lah{n-s+2}{k}_{S,r-1}$.
\end{proof}

In Theorem \ref{riden} we give several combinatorial identities for the $(S,r)$-Lah numbers.

\begin{theorem}\label{riden}
Let $n,k\geq 1$ and $n>r$ be integers, and $S$ a set of integers. The $(S,r)$-Lah numbers satisfy the following identities
\begin{align}
k\lah{n}{k}_{S,r}&=\sum_{s\in S}s!\binom{n}{s}\lah{n-s}{k-1}_{S,r};\label{rlahid1}\\
r\lah{n}{k}_{S,r}&=r\sum_{s\in S}s!\binom{n}{s-1}\lah{n-s+1}{k}_{S,r-1};\label{rlahid2}\\
(n+r)\lah{n}{k}_{S,r}&=\sum_{s\in S}s!s\binom{n}{s}\lah{n-s}{k-1}_{S,r}+r\sum_{s\in S}s!s\binom{n}{s-1}\lah{n-s+1}{k}_{S,r-1}.\label{rlahid3}
\end{align}
\end{theorem}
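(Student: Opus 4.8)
The plan is to establish all three identities by the same double-counting principle used in the preceding theorems: fix an $(S,r)$-partition of $[n+r]$, mark a distinguished piece of it, and then count the resulting marked objects in a second way by \emph{building the marked piece first}. Recall the structure being counted: there are exactly $r$ special lists (one per special element $1,\dots,r$) together with $k$ non-special lists, every list having size in $S$. For \eqref{rlahid1} I would mark one of the $k$ non-special lists, so that the left side $k\lah{n}{k}_{S,r}$ counts pairs (partition, marked non-special list). To rebuild such a pair I choose the size $s\in S$ of the marked list, select its $s$ elements among the $n$ non-special ones ($\binom{n}{s}$ ways), order them ($s!$ ways), and distribute the remaining $n-s$ non-special elements together with the $r$ special elements into $k-1$ non-special lists, which is counted by $\lah{n-s}{k-1}_{S,r}$. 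Summing over $s\in S$ gives the right side.

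For \eqref{rlahid2} I would instead mark one of the $r$ special lists, equivalently one of the special elements, so that $r\lah{n}{k}_{S,r}$ counts pairs (partition, marked special element). Building the marked special list first, the factor $r$ chooses the special element, $\binom{n}{s-1}$ chooses its $s-1$ non-special companions, $s!$ orders the whole list of size $s$, and the rest form an $(S,r-1)$-partition of the remaining $n-s+1$ non-special elements into $k$ non-special lists, counted by $\lah{n-s+1}{k}_{S,r-1}$. The one point needing care here is that after deleting the marked special element the surviving special elements form $\{1,\dots,r\}$ with one index removed rather than the set $\{1,\dots,r-1\}$; a relabeling argument shows the count is unaffected, so summing over $s\in S$ yields the claim.

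Identity \eqref{rlahid3} is the refinement that marks a single \emph{element} rather than a whole list: since the list sizes sum to $n+r$, the left side $(n+r)\lah{n}{k}_{S,r}$ counts pairs (partition, marked element among all $n+r$). I would split according to whether the marked element lies in a non-special or in a special list. In the first case the argument for \eqref{rlahid1} applies verbatim, with an extra factor $s$ recording which of the $s$ positions of the marked list is distinguished, producing $\sum_{s\in S}s!\,s\binom{n}{s}\lah{n-s}{k-1}_{S,r}$; in the second case the argument for \eqref{rlahid2} applies with the same extra factor $s$, producing $r\sum_{s\in S}s!\,s\binom{n}{s-1}\lah{n-s+1}{k}_{S,r-1}$. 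Adding the two cases gives the identity. I expect this to be the only genuine obstacle: one must check that the two cases are exhaustive and disjoint, and observe that in the special-list case the marked element may be either the special element itself or one of its non-special companions, so that the factor $s$ correctly counts all $s$ positions uniformly and no separate subcase is required.

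As an independent check, and an alternative proof in the spirit of Section~2, one can pass to generating functions. By the symbolic method the exponential generating function of the $(S,r)$-Lah numbers in the non-special variable is $\frac{1}{k!}H_S(x)^k\,H_S'(x)^r$, where $H_S(x)=\sum_{s\in S}x^s$: each of the $k$ non-special lists contributes the factor $H_S$, while each of the $r$ special lists contributes $H_S'$, the derivative accounting for the distinguished special element sitting inside its list. Identities \eqref{rlahid1} and \eqref{rlahid2} then reduce to elementary coefficient comparisons that factor this EGF through its $H_S$ and $H_S'$ pieces, while \eqref{rlahid3} corresponds to applying the operator $x\frac{d}{dx}+r$ to the EGF and expanding by the product rule. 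The combinatorial route is cleaner and matches the surrounding proofs, so I would present it as the main argument and mention the generating-function computation only as confirmation.
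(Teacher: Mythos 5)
Your proposal is correct and follows essentially the same route as the paper: the paper proves \eqref{rlahid1} by colouring one non-special list and rebuilding it first, and disposes of \eqref{rlahid2} and \eqref{rlahid3} by noting that they "follow similarly, colouring a special list, respectively an element" --- precisely the three markings you use, with your write-up simply supplying the details the paper leaves implicit. The generating-function confirmation is a harmless extra not present in the paper.
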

\begin{proof}
The left hand side of \eqref{rlahid1} counts  $(S,r)$ partitions into $k+r$ non-empty lists with one non-special list coloured. Count these partitions another way: first choose $s$ elements, that will be coloured, and construct a list in $s!\binom{n}{s}$ ways. The remaining $n-s+r$ elements form a partition into $k-1+r$ non-empty lists, such that the $r$ special elements are in distinct lists and the size of the lists are included in $S$.

The identities \eqref{rlahid2} and \eqref{rlahid3} follow similarly, colouring a special list, respectively an element.
\end{proof}

\begin{theorem}
Let $n,k\geq 1$ and $n>r$ be integers, and $S$ a set of integers. Further, let $u$ be an integer in $S$, $u\in S$. We have then
\begin{align}\label{rlahid4}
\lah{n}{k}_{S,r}=\sum_{i=0}^r\sum_{j=0}^{k}\binom{r}{i}(n)_{n-j}\frac{u^i}{(n-(u-1)i-uj)!}\lah{n-(u-1)i-uj}{k-j}_{S-\{u\},r-i}.
\end{align}
\end{theorem}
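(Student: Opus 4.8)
The plan is to give a direct double-counting argument, classifying each $(S,r)$-partition counted by $\lah{n}{k}_{S,r}$ according to how many of its blocks have size exactly $u$. The appearance of $\lah{\cdot}{\cdot}_{S-\{u\},r-i}$ on the right-hand side signals that we should \emph{extract all lists of size $u$} and check that what remains is exactly an $(S-\{u\})$-partition. Accordingly, I would let $i$ denote the number of \emph{special} lists of size $u$ (those containing one of the $r$ distinguished elements) and $j$ the number of \emph{non-special} lists of size $u$. These are precisely the summation indices $0\le i\le r$ and $0\le j\le k$, and since $i$ and $j$ count \emph{all} size-$u$ lists of each type, the leftover structure will contain no list of size $u$ at all.

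For fixed $i$ and $j$ I would build such a partition in stages. First choose which $i$ of the $r$ special elements lie in a size-$u$ list: $\binom{r}{i}$ ways. Each such special list is a length-$u$ sequence containing its special element, so placing that element in one of its $u$ positions contributes a factor $u^i$ across the $i$ lists. The remaining $(u-1)i+uj=:M$ slots of all the size-$u$ lists must be filled by distinct non-special elements chosen from the $n$ available; filling these ordered slots injectively gives $(n)_M=n!/(n-M)!$ ways. Because the $i$ special size-$u$ lists are distinguished by their special elements while the $j$ non-special size-$u$ lists are interchangeable blocks, I must divide by $j!$ to correct for the overcounting of the latter. The $n-M=n-(u-1)i-uj$ leftover non-special elements together with the $r-i$ leftover special elements then form a partition into $(k-j)+(r-i)$ lists, all of sizes in $S-\{u\}$, with the remaining special elements in distinct blocks; this is counted by $\lah{n-(u-1)i-uj}{k-j}_{S-\{u\},r-i}$.

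Multiplying these factors gives the contribution of each pair $(i,j)$ as
$$\binom{r}{i}\,u^{i}\,\frac{(n)_M}{j!}\,\lah{n-(u-1)i-uj}{k-j}_{S-\{u\},r-i}.$$
The final step is a purely algebraic repackaging to match the stated form: using $(n)_M=n!/(n-M)!$ and $(n)_{n-j}=n!/j!$, one rewrites $(n)_M/j!=(n)_{n-j}/(n-M)!$, turning the contribution into $\binom{r}{i}(n)_{n-j}\frac{u^{i}}{(n-(u-1)i-uj)!}\lah{n-(u-1)i-uj}{k-j}_{S-\{u\},r-i}$. Summing over all $i$ and $j$, and noting that infeasible choices (those with $M>n$ or $k-j<0$) contribute zero by the usual conventions, yields the claimed identity.

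The delicate point I expect to be the main obstacle is the asymmetric treatment of the two kinds of size-$u$ lists: the special ones are automatically labelled by their special elements and must \textbf{not} be divided out, whereas the non-special ones are genuine unordered blocks and force the $1/j!$ correction. Getting this bookkeeping right—and recognizing that the innocuous factor $(n)_{n-j}$ in the statement is simply $n!/j!$, packaging together the ordered placement of the non-special elements and the $1/j!$ symmetry correction—is where the care lies; the remaining manipulations are routine.
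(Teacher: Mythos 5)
Your proposal is correct and follows essentially the same route as the paper: both classify by the number $i$ of special and $j$ of non-special lists of size $u$, choose the $i$ special elements in $\binom{r}{i}$ ways, count the ways to populate and order the size-$u$ lists (your $u^{i}(n)_{M}/j!$ is the same quantity as the paper's multinomial coefficient times $(u!)^{i+j}$), and leave an $(S\setminus\{u\},r-i)$-partition on the remaining elements. The bookkeeping you flag as delicate—dividing by $j!$ only for the non-special size-$u$ blocks—is handled identically (and correctly) in the paper's count.
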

\begin{proof}
Let $i$ be the number of special lists of size $u$ and $j$ the number of non-special lists of size $u$. Choose the $i$ special elements in $\binom{r}{i}$ ways that are contained in a list of size $u$. Choose now for each of these $i$ special lists further $(u-1)$ elements, for the $j$ non-special lists $u$ elements and order the lists. These can be done in
$$
\frac{n!}{(u-1)!^iu!^jj!(n-(u-1)i-uj)!}(u!)^{i+j}
$$
ways. After simplification we obtain the formula.
\end{proof}
Setting $u=1$ into \eqref{rlahid4}, we obtain
$$
\lah{n}{k}_{S,r}=\sum_{i=0}^r\sum_{j=0}^k\binom{r}{i}\binom{n}{j}\lah{n-j}{k-j}_{S-\{1\}, r-i}.
$$

Finally, the Identity \eqref{id1} can be also generalized for the $(S,r)$-Lah numbers by using  Theorem 8 of \cite{mihoubi-2017a}. Let $f_{S,r,t}(x)$ be the function defined by
$$f_{S,r,t}(x):=(1+H_{S}(x))^t\left(\sum_{s\in S} sx^{s-1} \right)^r,$$
where $H_{S}(x)=\sum_{s\in S} x^{s}$.
Then
 \begin{align*}
\left. \frac{d^n}{dx^n}f_{S,r,t}(x)\right |_{x=0}:=f_{S,r,t}^{(n)}(0)=\sum_{k=0}^n\lah{n}{k}_{S,r} (t)_k.
\end{align*}

From the symbolic method it is possible to obtain the exponential generating function for the $(S,r)$-Lah numbers for a given set $S$. We have the construction
$$\mbox{SET}_k(\mbox{SEQ}_S(\mathcal{X})) \times \mbox{SEQ}_r(\Theta^*(\mbox{SEQ}_{S-1}(\mathcal{X}))).$$
In this construction $\Theta^*$ denotes a modification of the classical pointing operator. This operator is defined by a class $\mathcal B$ by
$$\mathcal{A}=\Theta^*\mathcal{B} \quad \text{iff} \quad \mathcal{A}_n=\{1,2,\ldots,n+1\}\times \mathcal{B}_n.$$
That is, in order to generate an  element in $\mathcal{A}$, create a gap for inserting our distinguished element,
in a list we can point to any element (and the gap is created after this pointed element) or the gap is at the beginning of the list, we insert the element as the starting element. So $A_n=(n+1)B_n$, then $A(x)=\frac{d}{dx}(xB(x))$.\\
From the construction above, we directly obtain the translation
\begin{align}\label{Sgenfunc}
\sum_{n=k}^{\infty}{\lah{n}{k}_{S,r}}\frac{x^n}{n!} = \frac{1}{k!}\left({\sum_{s\in S}x^s}\right)^k\left({\sum_{s\in S}sx^{s-1}}\right)^r.
\end{align}
Notice that if $L_{S,r}(n)$ denotes the total number of ordered $(S,r)$-partitions of an $n$-element set, then
$$
\sum_{n=0}^{\infty}L_{S,r}(n)\frac{x^n}{n!} = \exp\left({\sum_{s\in S}x^s}\right)\left({\sum_{s\in S}sx^{s-1}}\right)^r.
$$
\section{The $(S,r)$-Lah Matrix}

In this section we study the $(S,r)$-Lah matrix by using the theory of Riordan arrays \cite{Riordan2}. This theory is especially useful for the study of combinatorial matrices like Pascal matrix, Catalan matrix, Stirling matrices of both kinds, among other.

The \emph{$(S,r)$-Lah matrix} is the infinite matrix defined by
$$
\La_{S,r}:=\left[\lah{n}{k}_{S,r}\right]_{n, k \geq 0} .
$$
An infinite lower triangular matrix $L=\left[d_{n,k}\right]_{n,k\in \N}$ is called an \emph{exponential  Riordan array}, (cf.\ \cite{Barry}), if its column $k$ has generating function $g(x)\left(f(x)\right)^k/k!, k = 0, 1, 2, \dots$, where $g(x)$ and $f(x)$ are formal power series with $g(0) \neq 0$, $f(0)=0$ and $f'(0)\neq 0$.   The matrix corresponding to the pair $f(x), g(x)$ is denoted by  $\langle g(x),f(x)\rangle$.

If we multiply  $\langle g(x),f(x)\rangle$ by a column vector $(c_0, c_1, \dots)^T$ with exponential generating function $h(x)$, then the resulting column vector has exponential generating function $g(x)h(f(x))$. This property is known as the fundamental theorem of exponential Riordan arrays.  The product of two exponential Riordan arrays $\langle g(x),f(x)\rangle$ and $\langle h(x),\ell(x)\rangle$ is then defined by:
 $$\langle g(x),f(x)\rangle *\langle h(x),\ell(x)\rangle =\left\langle g(x)h\left(f(x)\right), \ell\left(f(x)\right)\right\rangle.$$
 The set of all exponential Riordan matrices is a group  under the operator $*$ (cf.\ \cite{Barry, Riordan2}).\\

 For example,  the Pascal matrix $\Pa$, the Stirling matrix of the second kind  $\mathcal{S}_2$,  and the Stirling matrix of the first kind  $\mathcal{S}_1$  are all given by the Riordan matrices:

 \begin{align*}
 \Pa=\langle e^x,x\rangle=&\left[\binom nk \right]_{n, k \geq 0}, \quad \quad   \mathcal{S}_2=\langle 1,e^x-1\rangle=\left[{n \brace k} \right]_{n, k \geq 0},  \quad \quad   \\
&\mathcal{S}_1=\langle 1, -\ln(1-x) \rangle=\left[{n \brack k} \right]_{n, k \geq 0}.
 \end{align*}

From Equation \eqref{Sgenfunc}, and the definition of Riordan matrix we obtain the following theorem.
\begin{theorem}
For all $S\subseteq \Z^+$ with $1\in S$, the matrix $\La_{S,r}$ is an exponential Riordan matrix given by
$$\La_S=\left\langle \left(\sum_{s\in S} sx^{s-1}\right)^r, \sum_{s \in S} x^{s}  \right\rangle.$$
\end{theorem}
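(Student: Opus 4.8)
The plan is to read the result off directly from the exponential generating function \eqref{Sgenfunc} together with the definition of an exponential Riordan array, so almost all the content is already in hand. Recall that a lower triangular matrix $[d_{n,k}]_{n,k\geq 0}$ equals $\langle g(x), f(x)\rangle$ precisely when, for every $k$, its $k$-th column has exponential generating function $g(x)f(x)^k/k!$, subject to the requirements $g(0)\neq 0$, $f(0)=0$ and $f'(0)\neq 0$. By definition the $k$-th column of $\La_{S,r}$ is the sequence $\left(\lah{n}{k}_{S,r}\right)_{n\geq 0}$, and Equation \eqref{Sgenfunc} asserts exactly that its exponential generating function is
$$\frac{1}{k!}\left(\sum_{s\in S}x^s\right)^k\left(\sum_{s\in S}sx^{s-1}\right)^r.$$
Matching this expression against the template $g(x)f(x)^k/k!$ forces the identification $f(x)=\sum_{s\in S}x^s$ and $g(x)=\left(\sum_{s\in S}sx^{s-1}\right)^r$, which is precisely the claimed pair.

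What remains is to verify that $g$ and $f$ satisfy the three defining conditions, and this is exactly where the hypothesis $1\in S$ enters. First I would observe that since $S\subseteq\Z^+$, every exponent $s$ appearing in $f$ is at least $1$, so $f(0)=0$ with no extra assumption. Next, noting the convenient identity $g(x)=(f'(x))^r$ with $f'(x)=\sum_{s\in S}sx^{s-1}$, I would evaluate at $x=0$: the only term carrying no positive power of $x$ is the one with $s=1$, contributing $1$, while all other terms vanish. Hence $f'(0)=1$ when $1\in S$ and $f'(0)=0$ otherwise, and correspondingly $g(0)=(f'(0))^r=1\neq 0$. Thus the single assumption $1\in S$ simultaneously guarantees both $f'(0)\neq 0$ and $g(0)\neq 0$, completing the verification.

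The proof therefore collapses to a routine check, and I do not anticipate any genuine obstacle; the one subtlety worth flagging is the structural role of $1\in S$. If $1\notin S$, then $f$ has no linear term, so $f'(0)=0$ and $\La_{S,r}$ fails to be an exponential Riordan array in the strict sense used here (it would instead be a ``stretched'' or generalized Riordan array associated to a series of higher order), which is exactly why the statement restricts attention to sets containing $1$.
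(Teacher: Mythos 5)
Your proposal is correct and follows exactly the route the paper takes: the theorem is read off by matching Equation \eqref{Sgenfunc} against the column generating functions $g(x)f(x)^k/k!$ in the definition of an exponential Riordan array. Your explicit verification that $1\in S$ is what guarantees $g(0)\neq 0$ and $f'(0)\neq 0$ is a worthwhile addition, since the paper leaves that check implicit.
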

Note that the row sum of the matrix  $\La_{S,r}$ is the sequence $L_{S,r}(n)$.

The inverse exponential Riordan array of $\La_{S,r}$ is denoted by
$$\F_{S,r}:=\left[\lah{n}{k}_{S,r}^{-1}\right]_{n, k \geq 0}.$$

In the following section we give a combinatorial interpretation for the absolute  values of the entries $\lah{n}{k}_{S,r}^{-1}$.   Note that Engbers et al. \cite{Engbers-2016}  give an  interesting combinatorial interpretation for the case $r=0$.

Since $\La_{S,r}*\F_{S,r} = \I,$ where $\I$ is the identity matrix, we have the orthogonality relation:
\begin{align*}
\sum_{i=k}^n \lah{n}{i}_{S,r}  \lah{i}{k}_{S,r}^{-1}&=\sum_{i=k}^n  \lah{n}{i}_{S,r}^{-1} \lah{i}{k}
=\delta_{k,n}.
\end{align*}
For the case $S=\Z^+$ and $r=0$ we recover the Equation \eqref{orto}.
From the orthogonality relation, we obtain the inverse relation:
\begin{align*}
f_n=\sum_{k=0}^n \lah{n}{k}_{S,r}^{-1} g_k  \iff g_n&=\sum_{k=0}^n \lah{n}{k}_{S,r}f_k.
\end{align*}

Now we define the \emph{$(S,r)$-Lah polynomials} by the combinatorial expression
$$L_{n, S, r}(x):=\sum_{k=0}^n\lah{n}{k}_{S,r}x^k.$$
Therefore, we obtain the  equality:
$$X = \La_{S,r}^{-1}\mathcal{L}_{S,r},$$
where $X=[1, x, x^2, \dots]^T$ and $\mathcal{L}_{S,r}=[L_{0,S,r}(x), L_{1,S,r}(x), L_{2,S,r}(x), \dots]^T$. Further,  $X=\F_{S,r}\mathcal{L}_{S,r}$ and
$$x^n=\sum_{k=0}^n \lah{n}{k}_{S,r}^{-1} L_{k,S,r}(x).$$
 Therefore,
 \begin{align}
 L_{n,S,r}(x)=x^n-\sum_{k=0}^{n-1}\lah{n}{k}_{S,r}^{-1} L_{k,S,r}(x), \quad n\geq 0.\label{iddet}
 \end{align}
 From the above identity we obtain a determinantal identity for the polynomials $L_{n,S,r}(x)$.
 \begin{theorem}\label{teo3}
 For all $S\subseteq \Z^+$ with $1\in S$, the $(S,r)$-Lah polynomials satisfy
 $$L_{n,S,r}(x)=(-1)^n\begin{vmatrix}
 1 & x & & \cdots && x^{n-1} & x^n\\
  1 & \lah{1}{0}_{S,r}^{-1} & &\cdots && \lah{n-1}{0}_{S,r}^{-1} & \lah{n}{0}_{S,r}^{-1}\\
    0 & 1 && \cdots && \lah{n-1}{1}_{S,r}^{-1}  & \lah{n}{1}_{S,r}^{-1} \\
   \vdots &  & & \cdots& & & \vdots\\
  0 & 0  && \cdots & & 1 & \lah{n}{n-1}_{S,r}^{-1}\\
 \end{vmatrix}.$$
 \end{theorem}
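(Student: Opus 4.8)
The plan is to read the recurrence \eqref{iddet} as a triangular system of linear equations and to recover $L_{n,S,r}(x)$ from it by a single cofactor expansion. Two structural facts about the inverse array $\F_{S,r}$ will be used throughout. First, $\F_{S,r}$ is lower triangular, so $\lah{j}{k}_{S,r}^{-1}=0$ whenever $k>j$; moreover, since $\La_{S,r}$ is an exponential Riordan matrix with $g(0)=1$ and $f'(0)=1$ (here we use $1\in S$), its diagonal entries equal $1$, whence the inverse is also unipotent and $\lah{j}{j}_{S,r}^{-1}=1$. Second, the relation established just before \eqref{iddet}, namely $x^j=\sum_{k=0}^{j}\lah{j}{k}_{S,r}^{-1}L_{k,S,r}(x)$, holds for every $j\ge 0$; combining these, $\lah{j}{n}_{S,r}^{-1}=\delta_{j,n}$ for $0\le j\le n$.

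Denote by $A$ the $(n+1)\times(n+1)$ matrix in the statement, with rows and columns indexed $0,\dots,n$: its top row is $A_{0,j}=x^j$, and for $i\ge 1$ its entries are $A_{i,j}=\lah{j}{i-1}_{S,r}^{-1}$. The first step is to recognise that the top row is an explicit combination of the remaining rows. Extending the sum harmlessly to $k=n$, the relation above gives
\[
A_{0,j}=x^j=\sum_{k=0}^{n}\lah{j}{k}_{S,r}^{-1}L_{k,S,r}(x)=\sum_{k=0}^{n-1}L_{k,S,r}(x)\,A_{k+1,j}+L_{n,S,r}(x)\,\delta_{j,n},
\]
where I have used $A_{k+1,j}=\lah{j}{k}_{S,r}^{-1}$ for $0\le k\le n-1$ together with $\lah{j}{n}_{S,r}^{-1}=\delta_{j,n}$. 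Subtracting $\sum_{k=0}^{n-1}L_{k,S,r}(x)$ times row $k+1$ from row $0$ is a determinant-preserving operation, and it replaces the top row by $(0,\dots,0,L_{n,S,r}(x))$.

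Expanding the resulting determinant along its new top row leaves a single nonzero term,
\[
\det A=(-1)^{\,0+n}L_{n,S,r}(x)\,M_{0,n},
\]
where $M_{0,n}$ is the minor obtained by deleting row $0$ and column $n$. That minor has rows indexed by $i=1,\dots,n$ and columns by $j=0,\dots,n-1$, with entry $\lah{j}{i-1}_{S,r}^{-1}$; writing $a=i-1$ turns it into the $(a,j)$-array $\lah{j}{a}_{S,r}^{-1}$ over $0\le a,j\le n-1$, which by the two facts above is upper triangular with unit diagonal and hence has determinant $1$. Therefore $\det A=(-1)^{n}L_{n,S,r}(x)$, which is exactly the claimed identity. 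I expect the only delicate point to be the index bookkeeping: keeping straight the shift between the row label $i$ of $A$ and the column label $i-1$ of $\F_{S,r}$, and verifying the sign $(-1)^{n}$ in the cofactor expansion together with the triangularity (rather than the precise values) of the surviving minor.
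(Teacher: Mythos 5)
Your proof is correct, and it rests on exactly the same foundation as the paper's: the triangular system $x^j=\sum_{k=0}^{j}\lah{j}{k}_{S,r}^{-1}L_{k,S,r}(x)$, i.e.\ Equation \eqref{iddet}. The only difference is mechanical. The paper expands the determinant by the last column, which (because the matrix is upper Hessenberg) reproduces the recurrence \eqref{iddet} for $(-1)^n\det A$ and therefore implicitly runs an induction on $n$; you instead use the relation to rewrite the top row as a combination of the remaining rows, clear it by a determinant-preserving row operation, and finish with a single first-row expansion against a unit upper-triangular minor. Your version has the small advantage of needing no induction and of making explicit the two facts the paper leaves tacit (unipotence of $\F_{S,r}$, which uses $1\in S$, and $\lah{j}{n}_{S,r}^{-1}=\delta_{j,n}$); otherwise the two arguments are interchangeable.
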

 \begin{proof}
This identity follows from Equation \eqref{iddet} and by expanding the determinant by the last column.
 \end{proof}

 For example, if $S=\{1, 2, 5\}$ and $r=2$, then
\begin{align*}
\La_{\{1, 2, 5 \}, 2}&=\left\langle \left(1 + 2x + 5x^4\right)^2 , x + x^2+x^5 \right\rangle\\
&=\left(\begin{array}{ccccccccc}
 1 & 0 & 0 & 0 & 0 & 0 & 0 & 0 & 0 \\
 4 & 1 & 0 & 0 & 0 & 0 & 0 & 0 & 0 \\
 8 & 10 & 1 & 0 & 0 & 0 & 0 & 0 & 0 \\
 0 & 48 & 18 & 1 & 0 & 0 & 0 & 0 & 0 \\
 240 & 96 & 156 & 28 & 1 & 0 & 0 & 0 & 0 \\
 2400 & 1320 & 720 & 380 & 40 & 1 & 0 & 0 & 0 \\
 0 & 24480 & 5760 & 3000 & 780 & 54 & 1 & 0 & 0 \\
 0 & 120960 & 126000 & 24360 & 9240 & 1428 & 70 & 1 & 0 \\
 1008000 & 0 & 1330560 & 483840 & 92400 & 23520 & 2408 & 88 & 1 \\
\end{array}
\right),
\end{align*}
and
\begin{multline*}
\F_{\{1, 2, 5\},2}=\left[\lah{n}{k}_{\{1, 2, 5\},2}^{-1}\right]_{n, k \geq 0}\\  \footnotesize
=\left(\begin{array}{ccccccccc}
 1 & 0 & 0 & 0 & 0 & 0 & 0 & 0 & 0 \\
 -4 & 1 & 0 & 0 & 0 & 0 & 0 & 0 & 0 \\
 32 & -10 & 1 & 0 & 0 & 0 & 0 & 0 & 0 \\
 -384 & 132 & -18 & 1 & 0 & 0 & 0 & 0 & 0 \\
 5904 & -2232 & 348 & -28 & 1 & 0 & 0 & 0 & 0 \\
 -110400 & 45000 & -7800 & 740 & -40 & 1 & 0 & 0 & 0 \\
 2422080 & -1051920 & 198000 & -21120 & 1380 & -54 & 1 & 0 & 0 \\
 -60641280 & 27921600 & -5624640 & 656040 & -48720 & 2352 & -70 & 1 & 0 \\
 1697351040 & -826801920 & 176863680 & -22176000 & 1812720 & -100464 & 3752 & -88 & 1
\end{array}\right).
\end{multline*}
The first few $(\{1, 2, 5\},2)$-Lah polynomials are
\begin{align*}
&1,\quad x+4, \quad x^2+10 x+8, \quad x^3+18 x^2+48 x, \quad x^4+28 x^3+156 x^2+96 x+240,\\
&x^5+40 x^4+380x^3+720 x^2+1320 x+2400, \quad x^6+54 x^5+780 x^4+3000 x^3+5760 x^2+24480 x, \\
&x^7+70 x^6+1428x^5+9240 x^4+24360 x^3+126000 x^2+120960 x, \ldots
\end{align*}
 In particular,
   \begin{align*}
   L_{6, \{1, 2, 5\}, 2}(x)&=-(x^5+40 x^4+380
   x^3+720 x^2+1320 x+2400) \\
   &=\begin{vmatrix}
 1 & x & x^2 & x^3 & x^4 & x^5 \\
 1 & -4 & 32 & -384 & 5904 & -110400 \\
 0 & 1 & -10 & 132 & -2232 & 45000 \\
 0 & 0 & 1 & -18 & 348 & -7800 \\
 0 & 0 & 0 & 1 & -28 & 740 \\
 0 & 0 & 0 & 0 & 1 & -40 \\
      \end{vmatrix}.
 \end{align*}

\textit{}\section{Combinatorial Interpretation: M\" obius inversion on Posets}

 In this section we provide another combinatorial interpretation for the Lah matrix $\L_{S,r}$, different from the combinatorial definition given at the beginning of Section \ref{rversion1}.  Using that interpretation, for the class of sets $S$ such that $S-1$ is an additive monoid, we construct a family of posets whose M\"obius function gives us the Lah inverse matrix $\F_{S,r}$.
 A fundamental role in our construction is played by the {\em asterisk lists}. An asterisk list of size $k$ is a list in $k$ symbols, plus an extra `ghost' element $\ast$. For example, $2\,4\,1\,\ast\,3\,5\,6$ is an asterisk list of length $6$. An asterisk list may be identified with an ordered pair of lists, $\tilde{\ell}=\ell_1\ast\ell_2=(\ell_1,\ell_2)$ (one, or even both of them, are allowed to be empty).
 \begin{proposition}\label{lahinterpretation}
 	The $(S,r)$-Lah number
 	$$\lah{n}{k}_{S,r}$$
 	counts the pairs of the form $(\tilde{\boldsymbol{\ell}},a)$, where
 	\begin{enumerate}
 		\item The first component is an $r$-tuple of asterisk lists, $\tilde{\boldsymbol{\ell}}=(\tilde{\ell_1},\tilde{\ell}_2,\dots,\tilde{\ell_r})$. The size of each component $\tilde{\ell_i}$ is in $S-1$. 
 		\item The second component of the pair, $a$, is a $k$ partition of non-empty lists, $a=\{\ell_1,\ell_2,\dots,\ell_k\}$. The size of each list in $a$ is in $S$. 
 		\item The sum of the sizes of the lists in the whole pair $(\tilde{\boldsymbol{\ell}},a)$ is  $n$.  
 		 
 	\end{enumerate}  
 \end{proposition}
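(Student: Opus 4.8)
The plan is to prove the identity by exhibiting an explicit bijection between the set partitions that define $\lah{n}{k}_{S,r}$ and the pairs $(\tilde{\boldsymbol{\ell}},a)$ in the statement; the guiding idea is that inside each ``special'' list the special element behaves exactly like the ghost symbol $\ast$. By definition $\lah{n}{k}_{S,r}$ counts partitions of $[n+r]$ into $k+r$ non-empty lists, all of whose sizes lie in $S$, in which the special elements $1,\dots,r$ occupy distinct lists; thus exactly $r$ of the lists are special (containing one special element each) and the remaining $k$ are non-special. I would set up the correspondence on these two groups of lists separately.

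For the forward map, fix such a partition. Each special element $j\in[r]$ lies in a unique special list, consisting of $j$ together with some non-special elements arranged in a line; I delete $j$ and record its former position by the ghost $\ast$, producing an asterisk list $\tilde{\ell}_j$ on non-special elements. If that special list had size $s\in S$, then $\tilde{\ell}_j$ has size $s-1\in S-1$, so condition (1) holds, and since the special elements are canonically ordered the tuple $\tilde{\boldsymbol{\ell}}=(\tilde{\ell}_1,\dots,\tilde{\ell}_r)$ is well defined. The $k$ non-special lists are left untouched and form the partition $a$, whose blocks retain their sizes in $S$, giving condition (2). Finally, the $n+r$ original elements split into the $r$ special ones and $n$ non-special ones, and only the special elements are deleted, so the total number of actual elements appearing in $(\tilde{\boldsymbol{\ell}},a)$ is $n$, which is condition (3); after relabeling the non-special elements by $[n]$ this is precisely a pair of the required form.

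The inverse map simply reinserts each special element: given $(\tilde{\boldsymbol{\ell}},a)$, I write $\tilde{\ell}_j=\ell_1\ast\ell_2$ and replace $\ast$ by $j$ to recover a special list $\ell_1\,j\,\ell_2$ of size $s\in S$; together with the blocks of $a$ this yields a partition of an $(n+r)$-element set into $k+r$ lists, with all sizes in $S$ and the special elements in distinct lists. The two maps are visibly mutually inverse, the empty cases $\ell_1=\varnothing$ or $\ell_2=\varnothing$ corresponding precisely to $j$ appearing first or last in its list, so the claimed equality of cardinalities follows.

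I do not expect a genuine obstacle here; the only points requiring care are the bookkeeping of the size shift $s\leftrightarrow s-1$ between a special list and its associated asterisk list (so that the constraints $S$ and $S-1$ match exactly), the edge cases where an asterisk list is empty on one or both sides (which arise when $1\in S$ and correspond to a singleton special list $\{j\}$), and the harmless relabeling of the non-special elements by $[n]$. As an independent verification one could instead compute the exponential generating function of the pairs directly: asterisk lists whose size ranges over $S-1$ have generating function $\sum_{s\in S}sx^{s-1}$ (there being $(m{+}1)!$ of them on $m$ elements), the ordered $r$-tuple contributes its $r$th power, a $k$-set of lists of sizes in $S$ contributes $\frac{1}{k!}\bigl(\sum_{s\in S}x^{s}\bigr)^{k}$, and the product recovers exactly the right-hand side of \eqref{Sgenfunc}, whose coefficient of $x^{n}/n!$ is $\lah{n}{k}_{S,r}$.
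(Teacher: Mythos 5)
Your proof is correct and follows essentially the same route as the paper: the paper likewise replaces each special element $i$ by the asterisk in its list to form $\tilde{\ell}_i$ and keeps the non-special lists as $a$, noting the correspondence is a bijection. Your additional checks (the size shift $s\leftrightarrow s-1$, the empty-side edge cases, and the generating-function cross-check against \eqref{Sgenfunc}) are sound but not needed beyond what the paper records.
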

\begin{proof}
	Let $a'=\{\ell'_1,\ell'_2,\dots,\ell'_r,\ell'_{r+1},\ell'_{r+2},\dots\ell'_{r+k}\}$ be a partition of lists as in the definition of $\lah{n}{k}_{S,r}$. The elements of $a'$ are ordered in such a way that for $i=1,2,\dots, r$ the list $\ell_i$ contains the element $i$. For $i=1,2,\dots,r$, define $\tilde{\ell}_i$ to be the asterisk list obtained by substituting the element $i$ in $\ell_i$ by the asterisk $\ast$. Then make $a:=\{\ell'_{r+1},\ell' _{r+2},\dots,\ell'_{r+k}\}$. The correspondence $a'\mapsto (\tilde{\boldsymbol{\ell}},a)$ is clearly a bijection. 
\end{proof}
The pair $(\tilde{\boldsymbol{\ell}},a)$ will be denoted by separating the $r$-tuple $\tilde{\boldsymbol{\ell}}$ from $a$ by a double bar, and the elements of $a$ by simple bars, $\tilde{\boldsymbol{\ell}}||a$.  

As an example of the notation, the pair $$((1\,3\,2\,\ast,\,\ast\, 5,\,4\,9\ast\,7\,6),\{8\, 11,12\, 10\})$$ will be written as
$$(1\,3\,2\,\ast,\,\ast\, 5\,,4\,9\ast\,7\,6)||8\, 11|12\, 10.$$

\begin{definition}\label{Sproperties}
 	A subset $S$ of $\mathbb{Z}^+$ such that $S-1$ is an additive monoid will be called a $^+ 1$ monoid.
 \end{definition}
For example,  the set $S$ of odd integers is a $^+1$ monoid, since $S-1$, the set of even integers is an additive monoid. More generally, for a positive integer $m$, the set of multiples of $m$ plus one is a $^+1$ monoid.
\begin{proposition}\label{Sprop}
	Let $S$ be a $^+1$ monoid. Then 
	\begin{enumerate}\item If $s_1, s_2,\dots, s_t$ and $t$ are all elements of $S$, then $s_1+s_2+\dots+s_t$ is in $S$.
		\item If $s_1, s_2,\dots, s_{t-1}$ are elements of $S$, and $t$ is also in $S$ (equivalently, $t-1\in S-1$), then $s_1+s_2+\dots+s_{t-1}$ is in $S-1$.
	\end{enumerate}
\end{proposition}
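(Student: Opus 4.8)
The plan is to transfer both statements into the additive monoid $M := S - 1$, where the hypotheses become transparent. By definition of a $^+1$ monoid, $M$ contains $0$ (equivalently $1 \in S$) and is closed under addition; by an immediate induction it is therefore closed under arbitrary finite sums of its elements. The membership dictionary I will use throughout is: $s \in S$ if and only if $s - 1 \in M$, and $s \in S-1$ if and only if $s \in M$.

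For part (1), I would start from the elementary identity
\[
\Big(\sum_{i=1}^{t} s_i\Big) - 1 \;=\; \sum_{i=1}^{t}(s_i - 1) \;+\; (t-1),
\]
which one verifies by expanding the right-hand side. Each summand $s_i - 1$ lies in $M$ because $s_i \in S$, and the extra term $t - 1$ lies in $M$ precisely because $t \in S$; this is exactly where the hypothesis ``$t \in S$'' enters. Hence the right-hand side is a finite sum of elements of $M$ and so lies in $M$, giving $\big(\sum_i s_i\big) - 1 \in M$, that is, $\sum_i s_i \in S$.

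Part (2) is the same manoeuvre with one fewer $s$-term. I would write
\[
\sum_{i=1}^{t-1} s_i \;=\; \sum_{i=1}^{t-1}(s_i - 1) \;+\; (t-1).
\]
Again each $s_i - 1 \in M$ for $i = 1, \dots, t-1$, and $t - 1 \in M$ because $t \in S$ (equivalently $t - 1 \in S - 1$, which is exactly the parenthetical form of the hypothesis). Closure under finite sums then places $\sum_{i=1}^{t-1} s_i$ in $M = S - 1$, as desired. (The degenerate case $t=1$ is the empty sum $0 \in M$, consistent with $1 \in S$.)

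I do not expect any genuine obstacle here: once the problem is moved into $M$, everything reduces to closure under addition. The only point demanding care is the bookkeeping of the shift --- recognizing that summing the $s_i$ overshoots the sum of the $s_i - 1$ by exactly $t$ (respectively $t-1$), and that the hypothesis $t \in S$ is precisely what supplies the compensating monoid element $t - 1$. This is also the conceptual heart of the statement: the condition ``$t \in S$'' is not incidental but is exactly what allows the number of parts itself to participate as an element of $S - 1$.
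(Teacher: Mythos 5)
Your proof is correct and follows essentially the same route as the paper: part (1) is verified via the identical identity $\bigl(\sum_i s_i\bigr)-1=\sum_i(s_i-1)+(t-1)$ together with closure of $S-1$ under addition. The only cosmetic difference is in part (2), where the paper deduces the claim from part (1) by setting $s_t=1$, while you repeat the telescoping computation directly with one fewer term; both are the same argument in substance.
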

		\begin{proof}
		(1)	Since $S-1$ is a monoid, we have that
		$$(s_1-1)+(s_2-1)+\dots+(s_t-1)+(t-1)=s_1+s_2+\dots+s_t-1\in S-1,$$
		hence, $s_1+s_2+\dots+s_t\in S.$
		
		(2) Since $S-1$ is a monoid, $0\in S-1$, and hence, $1\in S$. By (1), making, $s_t=1$ we get
		$$s_1+s_2+\dots+s_t=s_1+s_2+\dots+s_{t-1}+1\in S\Rightarrow s_1+s_2+\dots+s_{t-1}\in S-1.$$
		\end{proof}
The $^+1$ monoids are of independent interests. Applications in the construction of posets defined on compositions and combinatorial interpretations of its M\" obius function will be given in a forthcoming paper.

 We let $\mathfrak{L}_{S,r}[n]$ denote the set of pairs $\tilde{\boldsymbol{\ell}}||a$ on the set of labels $[n]=\{1,2,\dots,n\}$, and by $\mathfrak{L}_{S,r}[n,k]$ the same kind of pairs such that $a$ has exactly $k$ lists. By Proposition \ref{lahinterpretation}, we have $$|\mathfrak{L}_{S,r}[n,k]|=\lah{n}{k}_{S,r}.$$ 
 
 Let $\ell_1$ and $\ell_2$ be two lists over disjoint sets. We denote by $\ell_1+\ell_2$ the concatenation of both lists (also denoted by juxtaposition $\ell_1\ell_2$).
 
 Our objective is to construct a partial order, $\leq$, on $\mathfrak{L}_{S,r}[n]$ such that
 \begin{enumerate}
 	\item The poset $(\mathfrak{L}_{S,r}[n],\leq )$ would have a zero $\widehat{0}=(\ast,\ast,\dots,\ast)||1|2|\dots|n$.
  \item Denoting by $\mu$ the M\"obius function of  $\mathfrak{L}_{S,r}[n]$, the M\"obius cardinal of $\mathfrak{L}_{S,r}[n,k]$ on the partial order:
 \begin{equation}\label{eq.mobiuscard}|\mathfrak{L}_{S,r}[n,k]|_{\mu}:=\sum_{\tilde{\boldsymbol{\ell}}||a\in \mathfrak{L}_{S,r}[n,k]}\mu(\hat{0},\tilde{\boldsymbol{\ell}}||a)\end{equation}
 would give us the entries of the inverse Lah matrix:
  \begin{equation}\label{eq.invmat}|\mathfrak{L}_{S,r}[n,k]|_{\mu}=\F_{S,r}[n,k].\end{equation}
 \end{enumerate}
For our purposes we need some definitions.
\begin{definition}(Asterisk product) For two asterisk lists $\tilde{\ell}=\ell_1\ast\ell_2$, $\tilde{\ell}'=\ell_1'\ast\ell'_2$, we define the product $\tilde{\ell}\circledast \tilde{\ell}'$ to be the asterisk list obtained by the substitution of the asterisk symbol in the first list by the second list
	$$\tilde{\ell}\circledast \tilde{\ell}':=\ell_1\ell_1'\ast\ell'_2\ell_2.$$
	For two $r$-tuples $\tilde{\boldsymbol{\ell}},\, \tilde{\boldsymbol{\ell}}'$ of asterisk lists we define the asterisk product to be the $r$-tuple of the asterisk products of the components:
	$$\tilde{\boldsymbol{\ell}}\circledast \tilde{\boldsymbol{\ell}}':=(\tilde{\ell}_1\circledast\tilde{\ell}'_1,\tilde{\ell}_2\circledast \tilde{\ell}'_2,\dots,\tilde{\ell}_r\circledast \tilde{\ell}'_r).$$
\end{definition}
Observe that, since $S-1$ is a monoid, the operation $\circledast$ is closed under $r$-tuples whose component sizes are all in $S-1$. It is easy to check that the product $\circledast$ satisfies the cancellation law,
\begin{equation}\label{leftcancellation}\tilde{\boldsymbol{\ell}}\circledast\tilde{\boldsymbol{\ell}}'=\tilde{\boldsymbol{\ell}}\circledast\tilde{\boldsymbol{\ell}}''\Rightarrow\tilde{\boldsymbol{\ell}}'=\tilde{\boldsymbol{\ell}}''.  
\end{equation}  
\begin{definition}
Let $a$ be a partition of lists such that $|a|\in S-1$, and $\tilde{\ell}$ an asterisk list. We say that \emph{ $\tilde{\ell}$ can be constructed from $a$} if it can be obtained by concatenation (in any order) of the lists in $a$ together with the asterisk symbol $\ast$. More generally, we say that an $r$-tuple $\tilde{\boldsymbol{\ell}}$ of asterisk lists can be constructed from $a$  if $a$ can be written as a disjoint union $a=\uplus_{i=1}^r a_i$ (some of them may be empty) such that each $ \tilde{\ell}_i$ can be constructed from $a_i$, for every $i=1,2,\dots,r$. 
\end{definition}
 In the previous definition, if all the sizes of the lists in $a$ are in $S$ and  $\tilde{\ell}$ can be constructed from $a$, then the size of $\tilde{\ell}$ is in $S-1$. This claim follows easily from Proposition \ref{Sprop} (2). Similarly, if $\tilde{\boldsymbol{\ell}}$ can be constructed from a, the size of every component of $\tilde{\boldsymbol{\ell}}$ is in $S-1$.

 We first define the partial order for the special case $r=0$. 
\begin{definition}\label{curlyorder}
	Let $a$ and $a'$ be two partitions of lists, $a$ in $\mathfrak{L}_{S,0}[n]$. We say that $a\preccurlyeq  a'$ if every list $\ell$ in $a'$ is the concatenation of $s_{\ell}$ lists of $a$, $s_{\ell}$ being an element of $S$.
\end{definition} 
\begin{remark}\label{curlyorderr}From Proposition \ref{Sprop} (1), we have that  all the sizes of the lists in $a'$ are also in $S$. Hence, $\preccurlyeq$ is a well defined relation on the set $\mathfrak{L}_{S,0}[n]$.
It is easy to check that $\preccurlyeq$  is an order relation with  the partition of singleton lists $1|2|\dots|n$ as the zero element. 
\end{remark}
Now, we are ready to define the partial order on $\mathfrak{L}_{S,r}[n]$ for general $r$.

\begin{definition}\label{theorder}
	Let $\tilde{\boldsymbol{\ell}}||a$ and $\tilde{\boldsymbol{\ell}}'||a'$ be two elements of $\mathfrak{L}_{S,r}[n]$. We say that  $\tilde{\boldsymbol{\ell}}||a\leq \tilde{\boldsymbol{\ell}}'||a'$ if there exists a subset $a''$ of $a$, and $\tilde{\boldsymbol{\ell}}''$ constructed from $a''$ such that 
	\begin{enumerate}
		\item $a-a''\preccurlyeq a'$.
		\item $\tilde{\boldsymbol{\ell}}'=\tilde{\boldsymbol{\ell}}\circledast \tilde{\boldsymbol{\ell}}''.$
			\end{enumerate}\end{definition}
The intuition behind this partial order is the following. We get up in the poset in two ways,
\begin{enumerate}
\item One is by concatenating lists in the righthand side.
\item The other is by moving elements from the right to the left. This is done by constructing first an $r$-tuple of asterisk lists from the elements to be moved, and then inserting it in the left hand side by the operation $\circledast$.\end{enumerate} The partial order is then obtained by  the iteration of the two ways of going up.

\begin{example}\label{ex.order}Let $S=\mathcal{O}$ and $r=2$.
Let $\ell_1,\ell_2,\dots,\ell_{12}$ be linear orders of odd size. By the definition of $\leq$, we have that
\begin{equation*}
	(\ell_{11}\ast \ell_{10},\ell_{12}\ast)||\ell_1|\ell_2|\dots|\ell_9\leq 		(\ell_{11}\ell_1\ast\ell_2 \ell_{10},\ell_{12}\ell_5\ast\ell_4)||\ell_3\ell_7\ell_6|\ell_8|\ell_9
	\end{equation*}
	\noindent because $(\ell_{11}\ell_1\ast\ell_2 \ell_{10},\ell_{12}\ell_5\ast\ell_4)=(\ell_{11}\ast \ell_{10},\ell_{12}\ast)\circledast (\ell_1\ast\ell_2,\ell_5\ast\ell_4)$, $(\ell_1\ast\ell_2,\ell_5\ast\ell_4)$ constructed from $a''= \ell_1|\ell_2|\ell_4|\ell_5$ and 
	$$a-a''=\ell_3|\ell_6|\ell_7|\ell_8|\ell_9\preccurlyeq \ell_3\ell_7\ell_6|\ell_8|\ell_9.$$ 
	 
\end{example}

It is not difficult to verify that the poset $\mathfrak{L}_{S,r}[n]$ has a zero $(\ast,\ast,\dots,\ast)||1|2|\dots|n$. 
\begin{example}
Let us consider the case $S=\mathcal{O}$, and $r=2$. The zero of the poset $\mathfrak{L}_{\mathcal{O},2}[3]$ is $(\ast,\ast)||1|2|3$. The asterisk lists in the left hand side are allowed only to have even size, while those in the right hand size only odd size (one or three). Then, there are $3!=6$ elements covering $\widehat{0}$ that we can get without moving elements from the right to the left, $(\ast,\ast)||\sigma_1\,\sigma_2\,\sigma_3$, all the  permutations of $1\,2\,3$. We can move only two elements from the left to the right, to one of the two components. The number of ways of choosing them is $\binom{3}{2}=3$. Assume we are moving $1$ and $2$. The  asterisk pairs of lists that we can construct from them are $3!+3!=12$. The maximal elements of this form are:
\begin{eqnarray*}(1 2\ast,\ast)||3, (1\ast 2,\ast)||3, (2 1\ast, \ast )||3, (2\ast 1,\ast)||3, (\ast 1 2,\ast)||3, (\ast 2 1,\ast)||3, &&\\  
 (\ast, 1 2\ast)||3, (\ast,1\ast 2)||3, (\ast,2 1\ast )||3, (\ast, 2\ast 1)||3, (\ast, \ast 1 2)||3, (\ast, \ast 2 1)||3.&&\end{eqnarray*}
We have $3\times 12=36$ of such kind of maximal elements. Then, the poset has
   $42$ maximal  elements, all of them covering $\widehat{0}$. We have that $|\mathfrak{L}_{\mathcal{O},2}[3,3]|_{\mu}=1$, $|\mathfrak{L}_{\mathcal{O},2}[3,2]|_{\mu}=0$ (since $\mathfrak{L}_{\mathcal{O},2}[3,2]=\emptyset$),  $|\mathfrak{L}_{\mathcal{O},2}[3,1]|_{\mu}=-42$, and $|\mathfrak{L}_{\mathcal{O},2}[3,0]|_{\mu}=0$, because $\mathfrak{L}_{\mathcal{O},2}[3,0]=\emptyset$. 
   
 Since $\mathfrak{L}_{\mathcal{O},2}[4,1]=\emptyset=\mathfrak{L}_{\mathcal{O},2}[4,3]$ we have $|\mathfrak{L}_{\mathcal{O},2}[4,1]|_{\mu}=|\mathfrak{L}_{\mathcal{O},2}[4,3]|_{\mu}=0$. The M\"obius function of the intervals of the form $[\widehat{0},(\tilde{\ell},*)||\emptyset\,]$ and $[\widehat{0},(\ast,\tilde{\ell})||\emptyset\,]$, $|\tilde{\ell}|=4$,  is $2$ (see Fig. \ref{fig.pset1} (a)). There are $5!+5!=240$  of them. The M\"obius function of the intervals of the form $[\widehat{0},(\tilde{\ell}_1,\tilde{\ell}_2)||\emptyset\,]$ (See Fig. \ref{fig.pset1} (b)), is equal to $1$, and there are  $6\times 6\times 6=216$ of them. Hence, $|\mathfrak{L}_{\mathcal{O},2}[4,0]|_{\mu}=240\times 2+216\times 1=480+216=696$. The M\"obius cardinal $|\mathfrak{L}_{\mathcal{O},2}[4,2]|_{\mu}$ is easier to compute and left to the reader.
\begin{figure}[]\begin{center}
	\includegraphics[width=0.9 \linewidth]{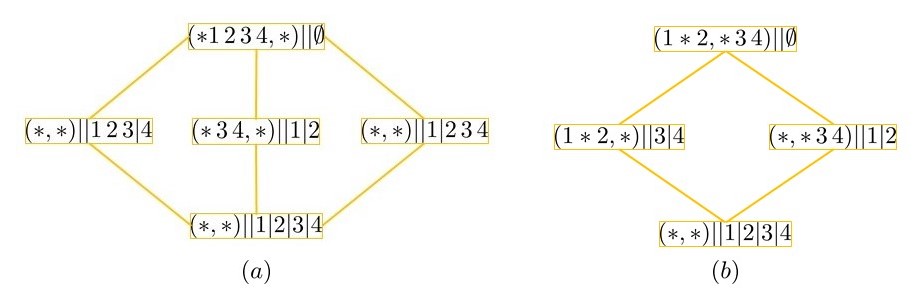}
\end{center}\caption{Intervals  $[\widehat{0},(\ast\,1\,2\,3\,4,\ast)||\emptyset\,]$ and $[\widehat{0},(1\,\ast\,2,\ast\,3\,4)||\emptyset]$. }\label{fig.pset1}
\end{figure}
  
See the first few rows of the matrix $\F_{\mathcal{O},2}$. The fourth and fifth row agree with Eq. (\ref{eq.invmat}) and our computations. 
$$\F_{\mathcal{O},2}=\left[\lah{n}{k}_{\mathcal{O},2}^{-1}\right]_{n, k \geq 0}= \left(
\begin{array}{cccccccc}
 1 & 0 & 0 & 0 & 0 & 0 & 0 & 0 \\
 0 & 1 & 0 & 0 & 0 & 0 & 0 & 0 \\
 -12 & 0 & 1 & 0 & 0 & 0 & 0 & 0 \\
 0 & -42 & 0 & 1 & 0 & 0 & 0 & 0 \\
 696 & 0 & -96 & 0 & 1 & 0 & 0 & 0 \\
 0 & 4440 & 0 & -180 & 0 & 1 & 0 & 0 \\
 -93600 & 0 & 16560 & 0 & -300 & 0 & 1 & 0 \\
 0 & -887040 & 0 & 47040 & 0 & -462 & 0 & 1 \\
\end{array}
\right).$$   
\end{example}
If $n\in S-1$, all the maximal elements of $\mathfrak{L}_{S,r}[n]$ are of the form $\tilde{\boldsymbol{\ell}}||\emptyset$.
 Another consequence of the Definition \ref{theorder} is that $a\preccurlyeq a'$ is equivalent to say that $(\ast,\ast,\dots,\ast)||a \leq (\ast,\ast,\dots,\ast)||a'$. Hence, given $S$, the partial order$\preccurlyeq$ on $\mathfrak{L}_{S,0}[n]$, is the same partial order as $\leq$ restricted to pairs where the left hand side is the trivial tuple $(\ast,\ast,\dots,\ast)$. 
 \begin{proposition}\label{prop:coideal}Let $\tilde{\boldsymbol{\ell}}||a$ be a fixed element of $\mathfrak{L}_{S,r}[n,k]$, and $j$ a non-negative integer, $k\geq j$. The set $\mathcal{C}_{\tilde{\boldsymbol{\ell}}||a}[j]=\{\tilde{\boldsymbol{\ell}}'||a':\tilde{\boldsymbol{\ell}}'||a'\geq \tilde{\boldsymbol{\ell}||a,\; |a'|=j\}}$ is equipotent with  $\mathfrak{L}_{S,r}[k,j],$
 	\begin{equation*}
 	|\mathcal{C}_{\tilde{\boldsymbol{\ell}}||a}[j]|=|\mathfrak{L}_{S,r}[k,j]|.  
 	\end{equation*}
 \end{proposition}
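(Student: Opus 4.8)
The plan is to exhibit an explicit bijection
$$\Phi:\mathcal{C}_{\tilde{\boldsymbol{\ell}}||a}[j]\longrightarrow \mathfrak{L}_{S,r}[k,j]$$
that realizes the $k$ lists of the fixed partition $a$ as the $k$ labels of a fresh $\mathfrak{L}_{S,r}$-structure. First I would fix once and for all an enumeration $\ell_1,\ell_2,\dots,\ell_k$ of the lists of $a$; these are pairwise distinguishable, being linear orders on pairwise disjoint label sets, so a canonical enumeration (say by least label) is available, and I will regard each $\ell_p$ as an indivisible \emph{atom} carrying the new label $p\in[k]$. The guiding idea is that going up from $\tilde{\boldsymbol{\ell}}||a$ never cuts through an atom: by Definition \ref{theorder} every $\tilde{\boldsymbol{\ell}}'||a'\ge \tilde{\boldsymbol{\ell}}||a$ is obtained by concatenating whole lists of $a$ (to build $a'$ through $\preccurlyeq$) and by inserting whole lists of $a$ into the left-hand tuple (to build $\tilde{\boldsymbol{\ell}}'=\tilde{\boldsymbol{\ell}}\circledast\tilde{\boldsymbol{\ell}}''$). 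Thus the atoms behave exactly like the points of a $\mathfrak{L}_{S,r}$-structure on $[k]$.

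Next I would define $\Phi$ as follows. Given $\tilde{\boldsymbol{\ell}}'||a'\in\mathcal{C}_{\tilde{\boldsymbol{\ell}}||a}[j]$, use the cancellation law \eqref{leftcancellation} to recover the unique $\tilde{\boldsymbol{\ell}}''$ with $\tilde{\boldsymbol{\ell}}'=\tilde{\boldsymbol{\ell}}\circledast\tilde{\boldsymbol{\ell}}''$. Each component $\tilde{\ell}''_i$ is, by the definition of the order, a concatenation of the atoms in some block $a''_i$ together with $\ast$; replacing each atom by its index $p$ turns $\tilde{\ell}''_i$ into an asterisk list $\tilde{m}_i$ on a subset of $[k]$ whose size equals the number $|a''_i|$ of atoms it uses, which lies in $S-1$. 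Likewise, since $a-a''\preccurlyeq a'$, every list of $a'$ is a concatenation of atoms; replacing atoms by their indices turns it into a list on $[k]$ whose size is the number of atoms used, which lies in $S$, and these $j$ lists form a partition $b$. Setting $\Phi(\tilde{\boldsymbol{\ell}}'||a'):=\tilde{\boldsymbol{m}}||b$, the total number of atoms used is $k$ and $|b|=j$, so $\tilde{\boldsymbol{m}}||b\in\mathfrak{L}_{S,r}[k,j]$; the required size conditions hold directly because the atom-counts are in $S-1$ and $S$, respectively (Proposition \ref{Sprop} is exactly what guarantees these same constraints remain consistent when read over the original labels). The inverse map $\Psi$ substitutes each index $p$ back by the atom $\ell_p$, thereby forming $\tilde{\boldsymbol{\ell}}''$ and $a'$, and returns $(\tilde{\boldsymbol{\ell}}\circledast\tilde{\boldsymbol{\ell}}'')||a'$.

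To finish, I would verify that $\Phi$ and $\Psi$ are mutually inverse. The delicate point, and the step I expect to be the main obstacle, is the \emph{unique segmentation} property: I must show that each $\tilde{\ell}''_i$ and each list of $a'$ recovered above splits in exactly one way into whole atoms of $a$. This is precisely what makes $\Phi$ a well-defined injection. It holds because the atoms are linear orders on pairwise disjoint label sets, preserved internally by concatenation, so grouping consecutive labels according to the (fixed) atom to which each belongs recovers the atom boundaries unambiguously — no label of one atom can lie within the span of another, and no two atoms can fuse into a single atom. Combined with the cancellation law, which forces $\tilde{\boldsymbol{\ell}}''$ (hence the moved set $a''$) to be uniquely determined by $\tilde{\boldsymbol{\ell}}'$, this shows the witnessing pair $(a'',\tilde{\boldsymbol{\ell}}'')$ of Definition \ref{theorder} is unique, after which $\Psi\circ\Phi=\mathrm{id}$ and $\Phi\circ\Psi=\mathrm{id}$ follow by inspection. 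The hypothesis $k\ge j$ merely excludes the degenerate case: for $j>k$ both sides vanish, since $1\in S$ forces every block to be non-empty and going up can only decrease the number of lists.
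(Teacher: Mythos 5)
Your proposal is correct and follows essentially the same route as the paper: recover $\tilde{\boldsymbol{\ell}}''$ uniquely via the cancellation law, then replace each list $\ell_i$ of $a$ (viewed as an indivisible segment) by its index $i$ in both $\tilde{\boldsymbol{\ell}}''$ and $a'$ to land in $\mathfrak{L}_{S,r}[k,j]$, with the inverse given by restitution and $\tilde{\boldsymbol{\ell}}'=\tilde{\boldsymbol{\ell}}\circledast\tilde{\boldsymbol{\ell}}''$. Your explicit discussion of the unique-segmentation-into-atoms point is a welcome refinement of a step the paper leaves implicit, but it is the same argument.
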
 
 \begin{proof}
	Choose one element $\tilde{\boldsymbol{\ell}}'||a'$ in $\mathcal{C}_{\tilde{\boldsymbol{\ell}}||a}[j]$. By the left cancellation law (Eq. \ref{leftcancellation}) and Definition \ref{theorder} there exists a unique $\tilde{\boldsymbol{\ell}}''$ such that $\tilde{\boldsymbol{\ell}}'=\tilde{\boldsymbol{\ell}}\circledast\tilde{\boldsymbol{\ell}}'',$ $\tilde{\boldsymbol{\ell}}''$ constructed from some subset $a''$ of $a$ and such that $a-a''\preccurlyeq a'.$ Hence, the correspondence $\tilde{\boldsymbol{\ell}}'||a'\mapsto \tilde{\boldsymbol{\ell}}''||a'$ is a bijection.  Ordering the elements of $a$, $a=\{\ell_1,\ell_2,\dots,\ell_k\}$, we substitute by $i$ any appearance of $\ell_i$ as a segment either in the components of $\tilde{\boldsymbol{\ell}}''$ or in any of the orders in $a'$. In that way we obtain a pair $\tilde{\boldsymbol{\ell}}'''||a'''$ in $\mathfrak{L}_{S,r}[k,j]$. The correspondence obtained by the composition
	$$\tilde{\boldsymbol{\ell}}'||a'\mapsto \tilde{\boldsymbol{\ell}}''||a'\mapsto  \tilde{\boldsymbol{\ell}}'''||a'''$$ is a bijection. We can go back by restituting $\ell_i$ in the place of $i$ and then making $\tilde{\boldsymbol{\ell}}'=\tilde{\boldsymbol{\ell}}\circledast \tilde{\boldsymbol{\ell}}''.$
In Example \ref{ex.order}, $\tilde{\boldsymbol{\ell}}'||a'=(\ell_{11}\ell_1\ast\ell_2 \ell_{10},\ell_{12}\ell_5\ast\ell_4)||\ell_3\ell_7\ell_6|\ell_8|\ell_9$ is sent by the above bijection to $(1\,\ast\,2, 5\,\ast\,4)||3\,7\,6|8|9$ in $\mathfrak{L}_{\mathcal{O},2}[9,3]$, since $\tilde{\boldsymbol{\ell}}||a=(\ell_{11}\ast \ell_{10},\ell_{12}\ast)||\ell_1|\ell_2|\dots|\ell_9\in\mathfrak{L}_{\mathcal{O},2}[n,9]$ for some undetermined $n$.\end{proof}

\begin{theorem}
	The M\" obius function of the posets $\mathfrak{L}_{S,r}[n]$, $n\in \mathbb{Z}^+$,  gives us the entries of the inverse of Lah matix,
	\begin{equation*}
	|\mathfrak{L}_{S,r}[n,k]|_{\mu}=\F_{S,r}[n,k],
	\end{equation*}
where $|\mathfrak{L}_{S,r}[n,k]|_{\mu}$ is as in Eq. (\ref{eq.mobiuscard}).   
\end{theorem}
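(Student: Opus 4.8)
The plan is to prove the identity $|\mathfrak{L}_{S,r}[n,k]|_{\mu} = \F_{S,r}[n,k]$ by showing that the M\"obius cardinals satisfy exactly the same defining recurrence as the entries of the inverse Lah matrix, namely the orthogonality relation $\sum_{i=k}^n \lah{n}{i}_{S,r}\,\F_{S,r}[i,k] = \delta_{n,k}$. Since the inverse matrix is uniquely determined by this relation together with the triangularity $\F_{S,r}[n,n]=1$, it suffices to establish the matching recurrence for the M\"obius side. The key tool is \emph{Rota's defining property of the M\"obius function}: for any element $y$ in a finite poset with zero $\widehat{0}$, one has $\sum_{x \leq y} \mu(\widehat{0},x) = \delta_{\widehat{0},y}$, equivalently $\mu(\widehat{0},\widehat{0})=1$ and $\sum_{x\leq y}\mu(\widehat{0},x)=0$ for $y\neq\widehat{0}$.

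First I would fix $\tilde{\boldsymbol{\ell}}||a \in \mathfrak{L}_{S,r}[n,k]$ and sum Rota's relation over all such elements; more usefully, I would group the sum $\sum_{x \leq y}\mu(\widehat{0},x)$ by the number of lists $i = |a_x|$ in the lower element $x$. Writing $y = \tilde{\boldsymbol{\ell}}'||a'$ with $|a'|=k$, the interval $[\widehat{0},y]$ decomposes according to the rank-like statistic ``number of lists.'' The decisive structural input is Proposition \ref{prop:coideal}: it identifies, for a fixed lower element $\tilde{\boldsymbol{\ell}}||a \in \mathfrak{L}_{S,r}[n,i]$, the set of elements above it having exactly $j$ lists with $\mathfrak{L}_{S,r}[i,j]$, giving $|\mathcal{C}_{\tilde{\boldsymbol{\ell}}||a}[j]| = \lah{i}{j}_{S,r}$ by Proposition \ref{lahinterpretation}. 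This is precisely the combinatorial bridge that converts a count of elements \emph{above} a given one into an $(S,r)$-Lah number.

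The central computation would then be the double-counting identity
\begin{equation*}
\sum_{\tilde{\boldsymbol{\ell}}'||a' \in \mathfrak{L}_{S,r}[n,k]} \;\sum_{\tilde{\boldsymbol{\ell}}||a \leq \tilde{\boldsymbol{\ell}}'||a'} \mu(\widehat{0}, \tilde{\boldsymbol{\ell}}||a) = \delta_{n,k},
\end{equation*}
where the inner sum is $0$ unless $\tilde{\boldsymbol{\ell}}'||a' = \widehat{0}$ by Rota. I would reorganize the left side by first summing over the lower element $\tilde{\boldsymbol{\ell}}||a$, grouped by its number of lists $i$, and counting how many top elements $\tilde{\boldsymbol{\ell}}'||a'$ with $k$ lists lie above each. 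By Proposition \ref{prop:coideal} that count is $\lah{i}{k}_{S,r}$, so the reorganized sum becomes $\sum_{i=k}^{n}\Big(\sum_{\tilde{\boldsymbol{\ell}}||a\in\mathfrak{L}_{S,r}[n,i]}\mu(\widehat{0},\tilde{\boldsymbol{\ell}}||a)\Big)\lah{i}{k}_{S,r} = \sum_{i=k}^n |\mathfrak{L}_{S,r}[n,i]|_{\mu}\,\lah{i}{k}_{S,r}$. Equating this with $\delta_{n,k}$ shows that the M\"obius cardinals satisfy the same orthogonality relation against the Lah matrix as the inverse entries do, and uniqueness of the inverse yields $|\mathfrak{L}_{S,r}[n,k]|_{\mu}=\F_{S,r}[n,k]$.

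The main obstacle I expect is the bookkeeping in the reorganization step: I must verify that the map ``fix a top element $y$, sum $\mu$ over the interval below it'' and the map ``fix a bottom element $x$, count top elements of a given rank above it'' genuinely produce the \emph{same} double sum, i.e. that the order relation $\leq$ of Definition \ref{theorder} is exactly compatible with the equipotence of Proposition \ref{prop:coideal} for every pair of ranks $i \geq k$, not merely for the extreme cases. This requires confirming that $\mathcal{C}_{\tilde{\boldsymbol{\ell}}||a}[k]$ ranges over precisely the top elements in the summation and that the $^+1$ monoid hypothesis on $S$ guarantees all intermediate sizes remain in $S-1$ so that every interval $[\widehat{0},y]$ is a well-defined subposet; the structural Propositions \ref{Sprop} and \ref{prop:coideal} are designed exactly to remove these difficulties, so the argument should close cleanly once the index manipulation is laid out carefully.
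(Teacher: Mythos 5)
Your proposal is correct and follows essentially the same route as the paper: both establish the orthogonality relation $\sum_{i=k}^{n}|\mathfrak{L}_{S,r}[n,i]|_{\mu}\,\lah{i}{k}_{S,r}=\delta_{n,k}$ by summing the defining property of the M\"obius function over all top elements with $k$ lists, interchanging the order of summation, and invoking Proposition \ref{prop:coideal} to count the elements of a given rank above a fixed lower element. The only cosmetic difference is that you make the appeal to uniqueness of the matrix inverse explicit, which the paper leaves implicit in its opening sentence.
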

\begin{proof}It is enough to prove that for every $0\leq j\leq n$, $$\sum_{j\leq k\leq n}|\mathfrak{L}_{S,r}[n,k]|_{\mu}|\mathfrak{L}_{S,r}[k,j]|=\delta_{n,j}.$$
	Let $\tilde{\boldsymbol{\ell}}'||a'$ be an element of $\mathfrak{L}_{S,r}[k,j]$. By the properties of the M\"obius function we have that
		\begin{equation*}
		\sum_{\widehat{0}\leq\boldsymbol{\tilde{\ell}}||a\leq \boldsymbol{\tilde{\ell}}'||a'}\mu(\widehat{0},\boldsymbol{\tilde{\ell}}||a) =\delta_{n,j}.
		\end{equation*}  
	 Summing over all the elements of $\mathfrak{L}_{S,r}[k,j]$, interchanging sums, and classifying by the size of $a$, we get
	 \begin{eqnarray*}
	 \delta_{n,j}&=&\sum_{\boldsymbol{\tilde{\ell}}'||a'\in \mathfrak{L}_{S,r}[n,j]}\;\sum_{\widehat{0}\leq\boldsymbol{\tilde{\ell}}||a\leq \boldsymbol{\tilde{\ell}}'||a'}\mu(\widehat{0},\boldsymbol{\tilde{\ell}}||a)\\&=&\sum_{\widehat{0}\leq \boldsymbol{\tilde{\ell}}||a}\;\sum_{\boldsymbol{\tilde{\ell}}'||a'\geq \boldsymbol{\tilde{\ell}}||a,\,|a'|=j}\mu(\widehat{0},\boldsymbol{\tilde{\ell}}||a)\\&=&\sum_{j\leq k\leq n}\;\sum_{\widehat{0}\leq \boldsymbol{\tilde{\ell}}||a,\, |a|=k}\mu(\widehat{0},\boldsymbol{\tilde{\ell}}||a)|\{\boldsymbol{\tilde{\ell}}'||a':\boldsymbol{\tilde{\ell}}'||a'\geq \boldsymbol{\tilde{\ell}}||a,\,|a'|=j\}|\\
	  &=&\sum_{j\leq j\leq n}\;|\mathfrak{L}_{S,r}[n,k]|_{\mu}||\mathfrak{L}_{S,r}[k,j]|.
	 \end{eqnarray*}
 The last identity is obtained from Proposition \ref{prop:coideal}.
	\end{proof}

\section{Ordered $(S,r)$-Partitions}

The Fubini numbers $F_{n}$  count the number of ordered set partitions.  It is natural to generalize them by  restricting the size of the blocks used in the partitions by a given set $S$, with $r$ special elements. This gives the  \emph{$(S,r)$- Fubini numbers}, $F_{n, S, r}$, where
the size of each block is contained in the set $S\subseteq \Z^+$ and the first $r$ elements are contained in distinct blocks.

From the above definition  it is clear that
\begin{equation}\label{SrFubini}
F_{n, S,r}=\sum_{k=0}^n (k+r)! { n \brace k}_{S,r},
\end{equation}
where ${ n \brace k}_{S,r}$ are the $(S,r)$-Stirling numbers of the second kind. This sequence was recently studied in \cite{BenyiRamirez2}.  The sequence ${ n \brace k}_{S,r}$ counts the total number of set partitions of $n+r$ elements  into $k+r$ non-empty blocks such that the cardinality of each block is contained in the set $S$ and the first $r$ elements are  in distinct blocks.

For the $(S,r)$-Stirling numbers of the second kind  we have the construction
$$\mbox{SET}_k(\mbox{SET}_S(\mathcal{X})) \times \mbox{SEQ}_r(\mbox{SET}_{S-1}(\mathcal{X})).$$
Then from the symbolic method we obtain the exponential generating function
\begin{equation}\label{Sgenfunc2}
\sum_{n=k}^\infty { n \brace k }_{S,r} \frac{x^n}{n!}=\frac{1}{k!}\left(\sum_{s \in S} \frac{x^{s}}{s!}\right)^k\left(\sum_{s\in S} \frac{x^{s-1}}{(s-1)!}\right)^r.
\end{equation}

 Let $g_{S,r,t}(x)$ be the function defined by
$$g_{S,r,t}(x):=(1+E_{S}(x))^t\left(\sum_{s\in S} \frac{x^{s-1}}{(s-1)!}\right)^r,$$
where $E_S(x)=\sum_{s\in S} \frac{x^{s}}{s!}$.
Then from Theorem 8 of \cite{mihoubi-2017a} we have
 \begin{align*}
\left. \frac{d^n}{dx^n}g_{S,r,t}(x)\right |_{x=0}:=g_{S,r,t}^{(n)}(0)=\sum_{k=0}^n{n \brace k}_{S, r}(t)_k.
\end{align*}

\begin{theorem}\label{gfunfubini}
The exponential generating function for the $(S,r)$-restricted Fubini numbers is
\begin{align*}
\sum_{n=0}^\infty F_{n, S, r}\frac{x^n}{n!}=\frac{r!}{(1-E_S(x))^{r+1}}\left({\sum_{s\in S}\frac{x^{s-1}}{(s-1)!}}\right)^r,
\end{align*}
where
$$
E_S(x)=\sum_{s\in S} \frac{x^{s}}{s!}.
$$
\end{theorem}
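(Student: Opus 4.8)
The plan is to start directly from the definitional identity \eqref{SrFubini}, substitute the known column generating function \eqref{Sgenfunc2} for the $(S,r)$-Stirling numbers of the second kind, and then collapse the resulting sum over $k$ by means of the generalized binomial series. First I would form the exponential generating function and interchange the order of summation:
$$\sum_{n=0}^\infty F_{n,S,r}\frac{x^n}{n!}=\sum_{n=0}^\infty\sum_{k=0}^n (k+r)!\,{n\brace k}_{S,r}\frac{x^n}{n!}=\sum_{k=0}^\infty (k+r)!\sum_{n\geq k}{n\brace k}_{S,r}\frac{x^n}{n!}.$$
The interchange is legitimate at the level of formal power series, since for each fixed power $x^n$ only finitely many terms contribute.

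Next I would insert \eqref{Sgenfunc2}. Abbreviating $E_S(x)=\sum_{s\in S}x^s/s!$ (as in the statement) and $D_S(x):=\sum_{s\in S}x^{s-1}/(s-1)!$, the inner sum equals $\tfrac{1}{k!}E_S(x)^k D_S(x)^r$, so after pulling out the factor $D_S(x)^r$ the whole expression becomes
$$D_S(x)^r\sum_{k=0}^\infty \frac{(k+r)!}{k!}\,E_S(x)^k.$$
The one substantive step is to identify the remaining series. Since $\frac{(k+r)!}{k!}=r!\binom{k+r}{r}$ and the generalized binomial theorem gives $\sum_{k\geq 0}\binom{k+r}{r}y^k=(1-y)^{-(r+1)}$, the specialization $y=E_S(x)$ yields $\sum_{k\geq 0}\frac{(k+r)!}{k!}E_S(x)^k=r!\,(1-E_S(x))^{-(r+1)}$.

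Combining the two displays produces exactly
$$\sum_{n=0}^\infty F_{n,S,r}\frac{x^n}{n!}=\frac{r!}{(1-E_S(x))^{r+1}}\,D_S(x)^r,$$
which is the claimed formula. I do not anticipate any genuine obstacle: the argument is essentially bookkeeping together with the generalized binomial identity. The only point deserving a word is the validity of the substitution $y=E_S(x)$; because $S\subseteq\Z^+$ we have $E_S(0)=0$, so $(1-E_S(x))^{-(r+1)}$ is a well-defined formal power series and the composition introduces no convergence issue.
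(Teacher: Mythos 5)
Your proposal is correct and follows essentially the same route as the paper: interchange the order of summation in \eqref{SrFubini}, substitute the column generating function \eqref{Sgenfunc2}, and collapse the sum over $k$ via the generalized binomial series. The paper merely compresses the intermediate algebra that you spell out (the identity $(k+r)!/k!=r!\binom{k+r}{r}$ and the expansion of $(1-E_S(x))^{-(r+1)}$), and it additionally offers a second, symbolic-method derivation that you do not need.
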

\begin{proof}
From  Equations  \eqref{SrFubini} and \eqref{Sgenfunc2} we have
\begin{align*}
\sum_{n=0}^\infty F_{n, S, r}\frac{x^n}{n!}&=\sum_{n=0}^\infty \sum_{k=0}^n (k+r)!{ n \brace k}_{S,r} \frac{x^n}{n!}=\sum_{k=0}^\infty (k+r)!\sum_{n=k}^\infty  { n \brace k}_{S,r} \frac{x^n}{n!}\\
%&=\sum_{k=0}^\infty\frac{(k+r)!}{k!}\left(\sum_{s\in S} \frac{x^{s-1}}{(s-1)!}\right)^r\left(\sum_{s\in S} \frac{x^{s}}{s!}\right)^k \\
%&=r!\sum_{k=0}^\infty\binom{k+r}{k}\left(\sum_{s\in S} \frac{x^{s}}{s!}\right)^k \left(\sum_{s\in S} \frac{x^{s-1}}{(s-1)!}\right)^r \\
&=\frac{r!}{(1-E_S(x))^{r+1}}\left(\sum_{s\in S} \frac{x^{s-1}}{(s-1)!}\right)^r.
\end{align*}
\end{proof}
We can also derive the generating function using the symbolic method.
We obtain an ordered partition of $(n+r)$ elements into $(k+r)$ blocks such that the first $r$ elements are in distinct blocks the following way: we take a sequence of ordinary blocks, eventually empty, then a special block, again a sequence of ordinary blocks, eventually empty, followed again by a special block and so on. There are $r$ special blocks, among the $r+1$ sequences of ordinary blocks. Finally, we put one of the $r$ elements into each special block, which can be done in $r!$ ways.
This leads to the construction:
\[\mbox{SEQ}(\mbox{SET}_S(\mathcal{X}))\times\mbox{SET}_{S-1}(\mathcal{X}) \mbox{SEQ}(\mbox{SET}_S(\mathcal{X}))\times\cdots\times \mbox{SET}_{S-1}(\mathcal{X})\times \mbox{SEQ}(\mbox{SET}_S(\mathcal{X})),\]
which translates by the symbolic method into
\[\frac{1}{(1-E_S(x))^{r+1}}\left(\sum_{s\in S} \frac{x^{s-1}}{(s-1)!}\right)^r.\]
Multiplying with $r!$ we obtain the generating function.

In Theorem \ref{recSrFubini} we give a recurrence relation for the sequence $F_{n, S,r}$
\begin{theorem}\label{recSrFubini}
Let $n\in \N$. Then the $(S,r)$-Fubini numbers satisfy the recurrence relation
$$F_{n,S,r}=\sum_{s\in S} \binom ns F_{n-s,S,r}+r\sum_{s\in S} \binom{n}{s-1}F_{n-(s-1),S,r-1}.$$
\end{theorem}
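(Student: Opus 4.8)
The plan is to give a direct combinatorial proof by conditioning on the \emph{first block} of an ordered $(S,r)$-partition. Recall that $F_{n,S,r}$ counts ordered set partitions (sequences of blocks) of a set with $n$ ordinary elements and $r$ special elements, where every block has size in $S$ and no two special elements lie in the same block. Since the sequence of blocks has a distinguished first term, I would peel it off and observe that the remaining blocks again form an ordered partition of the same type on the leftover elements. The structural fact that makes the two resulting cases clean and mutually exclusive is that every block contains \emph{at most one} special element, a consequence of the requirement that the $r$ special elements occupy distinct blocks.

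First I would treat the case in which the first block contains no special element. Such a block consists of $s$ ordinary elements for some $s\in S$, chosen in $\binom{n}{s}$ ways (there is no internal order to impose, since the blocks of a Fubini-type partition are sets, not lists). After removing it we are left with $n-s$ ordinary elements and all $r$ special elements, whose ordered partitions are counted by $F_{n-s,S,r}$; summing over $s\in S$ yields the first term $\sum_{s\in S}\binom{n}{s}F_{n-s,S,r}$. Next I would treat the case in which the first block contains a special element, necessarily exactly one. I would choose this special element in $r$ ways and its $s-1$ ordinary companions in $\binom{n}{s-1}$ ways, for a block of size $s\in S$. Removing this block leaves $n-(s-1)$ ordinary elements and $r-1$ special elements, counted by $F_{n-(s-1),S,r-1}$; summing over $s\in S$ and accounting for the $r$ choices of special element gives the second term $r\sum_{s\in S}\binom{n}{s-1}F_{n-(s-1),S,r-1}$. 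Adding the two disjoint and exhaustive cases produces the stated recurrence.

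The argument is essentially bookkeeping, so the main thing to get right is the parameter shift: a first block of size $s$ with no special element lowers only the ordinary count (by $s$), whereas a first block of size $s$ carrying a special element lowers the ordinary count by $s-1$ and the special count by $1$. As a consistency check I would also verify the identity through the generating function of Theorem \ref{gfunfubini}. Writing $\mathcal{F}_{S,r}(x)=\sum_n F_{n,S,r}x^n/n!$, the two binomial convolutions translate respectively into $E_S(x)\mathcal{F}_{S,r}(x)$ and $r\bigl(\sum_{s\in S}x^{s-1}/(s-1)!\bigr)\mathcal{F}_{S,r-1}(x)$, so the recurrence is equivalent to $(1-E_S(x))\mathcal{F}_{S,r}(x)=r\bigl(\sum_{s\in S}x^{s-1}/(s-1)!\bigr)\mathcal{F}_{S,r-1}(x)$, which follows immediately from the closed form $\mathcal{F}_{S,r}(x)=r!\bigl(\sum_{s\in S}x^{s-1}/(s-1)!\bigr)^r/(1-E_S(x))^{r+1}$.
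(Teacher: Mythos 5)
Your proof is correct and takes essentially the same approach as the paper: the paper conditions on the \emph{last} block of the ordered partition rather than the first, which is an immaterial difference, and the case analysis (non-special block of size $s$ versus special block with one of $r$ special elements and $s-1$ ordinary companions) is identical. Your write-up is in fact slightly more careful, since the paper's proof contains a typo ($F_{n-(s-1),S,r}$ where $F_{n-(s-1),S,r-1}$ is meant), and your generating-function cross-check is a nice confirmation.
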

\begin{proof}
The left-hand side counts the $(S,r)$-ordered set partitions of $[n+r]$. Consider the last block of an ordered set partition. Assume the last block  is non-special and has $s$ elements, for some $s\in S$.  This is done in $\binom ns F_{n-s,S,r}$ ways. If the last block is special then there are $r\binom{n}{s-1} F_{n-(s-1),S,r}$ options. Summing over $s$ gives the identity.
\end{proof}

\begin{theorem}\label{rSfubiniid1}
The $(S,r)$-restricted Fubini numbers satisfy
\begin{align}
F_{n, S, r}=\frac{r!}{2^{r+1}}\sum_{\ell=0}^\infty \frac{1}{2^\ell}\binom{r+\ell}{\ell}\sum_{k=0}^n {n \brace k}_{S,r}(\ell)_k.
\end{align}
\end{theorem}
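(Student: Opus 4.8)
The plan is to start from the defining relation \eqref{SrFubini}, namely $F_{n,S,r}=\sum_{k=0}^{n}(k+r)!\,{n\brace k}_{S,r}$, and to reduce the claimed identity to a single scalar identity depending only on $k$ and $r$. Both the target expression and \eqref{SrFubini} are linear combinations of the quantities ${n\brace k}_{S,r}$, so after interchanging the finite sum over $k$ with the infinite sum over $\ell$ on the right-hand side it suffices to prove that for every fixed $k\ge 0$,
\[
(k+r)!=\frac{r!}{2^{r+1}}\sum_{\ell=0}^{\infty}\frac{1}{2^{\ell}}\binom{r+\ell}{\ell}(\ell)_k.
\]
The interchange of summation is legitimate because every term involved is nonnegative (the $(S,r)$-Stirling numbers, the falling factorials $(\ell)_k$, and the binomial coefficients are all $\ge 0$), so a rearrangement of nonnegative terms (Tonelli) applies, and the $\ell$-series converges geometrically.

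To establish the scalar identity I would first simplify the summand. Writing $(\ell)_k=\ell!/(\ell-k)!$ and $\binom{r+\ell}{\ell}=(r+\ell)!/(r!\,\ell!)$, the product collapses to $(r+\ell)!/\bigl(r!\,(\ell-k)!\bigr)$, whence
\[
\sum_{\ell=0}^{\infty}\frac{1}{2^{\ell}}\binom{r+\ell}{\ell}(\ell)_k
=\frac{1}{r!}\sum_{\ell=k}^{\infty}\frac{1}{2^{\ell}}\,\frac{(r+\ell)!}{(\ell-k)!},
\]
the terms with $\ell<k$ vanishing because $(\ell)_k=0$ there. After the substitution $m=\ell-k$ and pulling out $2^{-k}$, the remaining sum becomes $\tfrac{1}{2^{k}r!}\sum_{m\ge 0}2^{-m}(r+k)!\binom{r+k+m}{m}$, using $(r+k+m)!/m!=(r+k)!\binom{r+k+m}{m}$.

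The last step is to recognize the negative binomial series $\sum_{m\ge 0}\binom{N+m}{m}x^{m}=(1-x)^{-(N+1)}$ with $N=r+k$ and $x=\tfrac12$, which evaluates to $2^{\,r+k+1}$. Collecting factors gives $\sum_{\ell}2^{-\ell}\binom{r+\ell}{\ell}(\ell)_k=2^{\,r+1}(r+k)!/r!$, and multiplying by $r!/2^{r+1}$ returns exactly $(k+r)!$, as required. I expect the only genuine subtleties to be the bookkeeping of the factorials when collapsing the summand and the justification of the term-by-term interchange of the two sums; once the summand is reduced to the binomial form, the evaluation via the negative binomial series is immediate. (As a sanity check, the case $r=0$ recovers the classical identity $n!=\tfrac12\sum_{\ell\ge 0}2^{-\ell}(\ell)_n$, and the weight $\binom{r+\ell}{\ell}$ is precisely what accounts for the $r$ special blocks.)
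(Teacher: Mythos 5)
Your proof is correct, but it takes a genuinely different route from the paper. The paper works at the level of exponential generating functions: it starts from Theorem \ref{gfunfubini}, rewrites $1-E_S(x)$ as $2-\bigl(E_S(x)+1\bigr)$, expands $\bigl(1-\tfrac{E_S(x)+1}{2}\bigr)^{-(r+1)}$ by the negative binomial series in the \emph{function} $\tfrac{E_S(x)+1}{2}$, recognizes each term as the potential polynomial $g_{S,r,\ell}(x)$ whose $n$-th Taylor coefficient is $\tfrac{1}{n!}\sum_k{n\brace k}_{S,r}(\ell)_k$, and compares coefficients of $x^n$. You instead bypass the generating function entirely: starting from the definition $F_{n,S,r}=\sum_k(k+r)!{n\brace k}_{S,r}$, you reduce the theorem by linearity to the purely numerical identity
\[
(k+r)!=\frac{r!}{2^{r+1}}\sum_{\ell=0}^{\infty}\frac{1}{2^{\ell}}\binom{r+\ell}{\ell}(\ell)_k,
\]
which you verify by collapsing $\binom{r+\ell}{\ell}(\ell)_k$ to $(r+\ell)!/\bigl(r!\,(\ell-k)!\bigr)$, shifting the index, and evaluating the negative binomial series $\sum_m\binom{r+k+m}{m}2^{-m}=2^{r+k+1}$; the interchange of sums is justified by nonnegativity. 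Your computation checks out at every step. What your approach buys is self-containedness and transparency: it needs neither Theorem \ref{gfunfubini} nor the potential-polynomial identity, and it makes plain that the theorem is a fixed scalar identity in $k$ and $r$ into which the $(S,r)$-Stirling numbers enter only linearly, so the set $S$ plays no role. What the paper's approach buys is that it sits naturally in the generating-function framework used throughout and directly generalizes the derivation of the classical formula $F_n=\tfrac12\sum_{k\ge 0}k^n/2^k$ quoted immediately after the theorem.
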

\begin{proof}
From Theorem \ref{gfunfubini} we have
\begin{align*}
\sum_{n=0}^\infty F_{n, S, r}\frac{x^n}{n!}=&\frac{r!}{(2-(E_S(x)+1))^{r+1}}\left({\sum_{s\in S}\frac{x^{s-1}}{(s-1)!}}\right)^r\\
=&\frac{r!}{2^{r+1}}\frac{1}{\left(1-\left(\frac{E_S(x)+1}{2}\right)\right)^{r+1}}\left({\sum_{s\in S}\frac{x^{s-1}}{(s-1)!}}\right)^r\\
=&\frac{r!}{2^{r+1}}\sum_{\ell=0}^\infty\binom{r+\ell}{\ell}\left(\frac{E_S(x)+1}{2}\right)^{\ell}\left({\sum_{s\in S}\frac{x^{s-1}}{(s-1)!}}\right)^r\\
=&\frac{r!}{2^{r+1}}\sum_{\ell=0}^\infty\frac{1}{2^\ell}\binom{r+\ell}{\ell}g_{\ell,S,r}(x).
\end{align*}
Since
$$[x^n]g_{S,r,k}(x)=\frac{1}{n!}\left.\frac{d^n}{dx^n}g_{S,r,k}(x)\right |_{x=0}=\frac{1}{n!}\sum_{\ell=0}^n{n \brace \ell}_{S,r} (k)_\ell,
$$

we have
\begin{align*}
\sum_{n=0}^\infty F_{n, S, r}\frac{x^n}{n!}=&\frac{r!}{2^{r+1}}\sum_{\ell=0}^\infty\frac{1}{2^\ell}\binom{r+\ell}{\ell} \sum_{n=0}^\infty \left(\frac{1}{n!}\sum_{k=0}^n { n \brace k }_{S,r}(\ell)_k\right)x^n.
\end{align*}
Comparing the $n$-th coefficient we obtain the desired result.
\end{proof}

Note that the above equality is a generalization of the identity (cf.\ \cite[pp. 228]{comtet-1974a})
 $$F_n=\frac{1}{2}\sum_{k=0}^\infty \frac{k^n}{2^k}.$$

\section{Doubly Ordered $(S,r)$-Partitions}
In this section we consider ordered lists such that the order of the elements in each list matters.  This kind of ordered partitions are called \emph{doubly ordered partition} (cf.\ \cite{ManSha2}). In particular, we denote by $\D_{n, S,r}$ the total number of doubly ordered partitions of $[n+r]$ such that the size $s$ of each list belongs to a given set $S$ of positive integers and the first $r$ elements are in distinct blocks.  It is clear that
$$\D_{n,S,r}=\sum_{k=0}k!\lah{n}{k}_{S,r}.$$

\begin{theorem}\label{gfundoubly}
The exponential generating function for the sequence  $\D_{n, S,r}$ is
\begin{align*} %\label{gfunSrfubini}
\sum_{n=0}^\infty \D_{n, S, r}\frac{x^n}{n!}=\frac{r!}{(1-\sum_{s\in S}x^s)^{r+1}}\cdot\left(\sum_{s\in S}sx^{s-1}\right)^r.
\end{align*}
\end{theorem}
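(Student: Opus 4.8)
The plan is to mirror the derivation of Theorem~\ref{gfunfubini}, since the doubly ordered case is structurally identical but with lists (sequences) in place of blocks (sets). I would start from the defining relation
$$\D_{n,S,r}=\sum_{k=0}^n k!\lah{n}{k}_{S,r},$$
multiply by $x^n/n!$, and sum over $n\geq 0$. After interchanging the order of summation to put the sum over $k$ outside, the inner sum becomes $\sum_{n\geq k}\lah{n}{k}_{S,r}x^n/n!$, which is exactly the exponential generating function computed in Equation~\eqref{Sgenfunc}. Substituting that expression turns the double sum into a single sum over $k$.

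Concretely, after the substitution I expect
\begin{align*}
\sum_{n=0}^\infty \D_{n,S,r}\frac{x^n}{n!}
&=\sum_{k=0}^\infty k!\left(\frac{1}{k!}\left(\sum_{s\in S}x^s\right)^k\left(\sum_{s\in S}sx^{s-1}\right)^r\right)\\
&=\left(\sum_{s\in S}sx^{s-1}\right)^r\sum_{k=0}^\infty\left(\sum_{s\in S}x^s\right)^k.
\end{align*}
The factor $k!$ cancels the $1/k!$ coming from the $\mbox{SET}_k$ construction in Equation~\eqref{Sgenfunc}, which is precisely the combinatorial effect of ordering the $k$ non-special lists. The remaining sum over $k$ is a geometric series in $\sum_{s\in S}x^s$.

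Summing the geometric series gives $1/(1-\sum_{s\in S}x^s)$, so the generating function becomes
$$\frac{1}{1-\sum_{s\in S}x^s}\left(\sum_{s\in S}sx^{s-1}\right)^r.$$
To match the stated form with the exponent $r+1$ and the factor $r!$, I would observe that the doubly ordered count also orders the $r$ special lists among themselves and intersperses them with the ordered non-special lists, exactly as in the symbolic construction preceding Theorem~\ref{recSrFubini}. The cleanest route is therefore to redo the symbolic-method argument used for the Fubini case, replacing every $\mbox{SET}_S$ by $\mbox{SEQ}_S$ and every $\mbox{SET}_{S-1}$ by $\Theta^*(\mbox{SEQ}_{S-1})$; this produces $r+1$ interspersed sequences of ordered blocks, yielding the power $(1-\sum_{s\in S}x^s)^{-(r+1)}$, the $r$ pointing-operator factors $\sum_{s\in S}sx^{s-1}$, and the $r!$ from placing the special elements.

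The main obstacle is reconciling the two exponents: the naive expansion of $\sum_k k!\lah{n}{k}_{S,r}x^n/n!$ gives only $(1-\sum_{s\in S}x^s)^{-1}$, whereas the theorem claims $(1-\sum_{s\in S}x^s)^{-(r+1)}$ and an extra $r!$. The resolution is that the definition $\D_{n,S,r}=\sum_k k!\lah{n}{k}_{S,r}$ orders only the $k$ non-special lists, so one must either adopt the convention that the $r$ special lists are also ordered (inserting an additional factor counting their $r!$ arrangements and their interleaving) or argue directly via the symbolic construction. I would handle this by presenting the symbolic-method derivation as the primary proof, since it transparently accounts for all $r+1$ sequence factors and the $r!$, and is the exact analogue of the argument already given for Theorem~\ref{gfunfubini}.
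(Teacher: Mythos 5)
Your final proof---the symbolic-method construction interleaving $r+1$ sequences of ordinary lists with $r$ pointed special lists and multiplying by $r!$---is exactly the paper's proof of Theorem~\ref{gfundoubly}, so the proposal is correct and takes essentially the same route. The ``obstacle'' you flag is real but is a typo in the paper rather than a gap in your argument: the defining relation should read $\D_{n,S,r}=\sum_{k\geq 0}(k+r)!\lah{n}{k}_{S,r}$ (all $k+r$ lists are ordered, exactly as in the Fubini case), and with that correction your algebraic expansion also closes, since $\sum_{k\geq 0}\frac{(k+r)!}{k!}y^k=\frac{r!}{(1-y)^{r+1}}$ with $y=\sum_{s\in S}x^s$ reproduces the stated formula.
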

\begin{proof}
We obtain a doubly ordered partition of $[n+r]$ taking a sequence of ordinary lists, eventually empty, then a special list, again a sequence of ordinary lists, eventually empty, followed again by a special list and so on. There are $r$ special lists among the $r+1$ sequences of ordinary lists. We have to point to a gap in each special list where we can insert one of the special elements.
This leads to the construction:
\[\mbox{SEQ}(\mbox{SEQ}_S(\mathcal{X}))\times\Theta^*(\mbox{SEQ}_{S-1}(\mathcal{X})) \mbox{SEQ}(\mbox{SEQ}_S(\mathcal{X}))\times\cdots\times \Theta^*(\mbox{SEQ}_{S-1}\mathcal{X})\times \mbox{SEQ}(\mbox{SEQ}_S(\mathcal{X})),\]
which translates by the symbolic method into
\[
\frac{1}{(1-\sum_{s\in S}x^s)^{r+1}}\cdot\left(\sum_{s\in S}sx^{s-1}\right)^r.
\]
Multiplying with $r!$ we obtain the desired result.
\end{proof}

The proofs of the following two theorems  are analogous to the one given for Theorems \ref{recSrFubini}  and \ref{rSfubiniid1}.

\begin{theorem}
Let $n\in \N$. Then the sequence  $\D_{n, S,r}$  satisfies the recurrence relation
$$\D_{n,S,r}=\sum_{s\in S} (n)_s \D_{n-s,S,r} + r\sum_{s\in S} s(n)_{s-1}\D_{n-(s-1),S,r-1}.$$
\end{theorem}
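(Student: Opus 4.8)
The plan is to mimic the combinatorial recurrence proof of Theorem~\ref{recSrFubini}, but now decomposing a \emph{doubly ordered} partition according to its last list rather than its last block. Recall that $\D_{n,S,r}$ counts doubly ordered $(S,r)$-partitions of $[n+r]$: the underlying object is an ordered sequence of lists, where each list is itself an ordered arrangement of its elements, the size of every list lies in $S$, and the $r$ special elements $1,\dots,r$ occupy distinct lists. The left-hand side counts all such objects, and the strategy is to partition this set according to whether the final list in the sequence is special (contains one of the $r$ distinguished elements) or non-special.

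First I would treat the non-special case. Suppose the last list is ordinary and has size $s$ for some $s\in S$. Since the elements of this list come from the $n$ non-special labels and their \emph{internal order} matters, the number of ways to choose and arrange them is the falling factorial $(n)_s = n(n-1)\cdots(n-s+1)$, i.e.\ we select an ordered $s$-tuple from the $n$ non-special elements. Removing this last list leaves a doubly ordered $(S,r)$-partition on the remaining $n-s$ non-special elements (the special structure is untouched), contributing $\D_{n-s,S,r}$. Summing over $s\in S$ gives the first term $\sum_{s\in S}(n)_s\,\D_{n-s,S,r}$.

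Next I would handle the special case, where the final list contains one of the $r$ special elements. Choose which special element occupies this last list in $r$ ways. The list then has total size $s\in S$, so it contains $s-1$ non-special elements together with the chosen special element. The subtlety—and the only point demanding care—is the bookkeeping of the internal order: I would first select and order the $s-1$ non-special elements in $(n)_{s-1}$ ways, and then insert the special element into one of the $s$ available positions (gaps) within that ordered arrangement, giving a factor of $s$. This is exactly the $\Theta^{*}$ pointing operator already used in Theorem~\ref{gfundoubly}: pointing into a $\mbox{SEQ}_{S-1}$ list of size $s-1$ produces $s$ choices. Deleting this last special list leaves a doubly ordered $(S,r-1)$-partition on the remaining $n-(s-1)$ non-special elements and $r-1$ special elements, counted by $\D_{n-(s-1),S,r-1}$. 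Multiplying the factors yields $r\sum_{s\in S}s\,(n)_{s-1}\,\D_{n-(s-1),S,r-1}$.

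Adding the contributions of the two disjoint cases gives
$$\D_{n,S,r}=\sum_{s\in S}(n)_s\,\D_{n-s,S,r}+r\sum_{s\in S}s\,(n)_{s-1}\,\D_{n-(s-1),S,r-1},$$
which is the claimed recurrence. The main obstacle, as the excerpt itself anticipates by pointing to the proof of Theorem~\ref{recSrFubini}, is ensuring the factor $s\,(n)_{s-1}$ in the special term is justified correctly: one must verify that the $\Theta^{*}$ gap-insertion count is $s$ (and not $s-1$), reflecting that the special element may be placed either after any of the $s-1$ ordered non-special elements or at the very front of the list. Everything else follows the same template as the Fubini case, merely replacing set blocks by ordered lists and hence replacing the set-enumeration factors $\binom{n}{s}$, $\binom{n}{s-1}$ by the ordered-list factors $(n)_s$, $s\,(n)_{s-1}$.
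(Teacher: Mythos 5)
Your proof is correct and follows exactly the route the paper intends: the paper omits the argument, stating only that it is analogous to the last-block decomposition used for Theorem~\ref{recSrFubini}, and you carry out precisely that decomposition by the last list, with the ordered-selection factor $(n)_s$ in the non-special case and the correctly justified factor $r\,s\,(n)_{s-1}$ (via the $s$ insertion positions for the special element) in the special case. No gaps.
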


\begin{theorem}
The sequence  $\D_{n, S,r}$ satisfies
\begin{align*}
\D_{n, S, r}=\frac{r!}{2^{r+1}}\sum_{\ell=0}^\infty \frac{1}{2^\ell}\binom{r+\ell}{\ell}\sum_{k=0}^n \lah{n}{k}_{S,r}(\ell)_k.
\end{align*}
\end{theorem}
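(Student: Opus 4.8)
The plan is to mirror exactly the derivation of Theorem~\ref{rSfubiniid1}, replacing the $(S,r)$-Stirling generating function with the $(S,r)$-Lah generating function established in Theorem~\ref{gfundoubly}. The target identity is the Lah analogue of the Fubini result, so the same algebraic manipulation of the denominator should carry it through with only cosmetic changes.

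First I would start from the exponential generating function of Theorem~\ref{gfundoubly},
\[
\sum_{n=0}^\infty \D_{n, S, r}\frac{x^n}{n!}=\frac{r!}{\left(1-\sum_{s\in S}x^s\right)^{r+1}}\left(\sum_{s\in S}sx^{s-1}\right)^r,
\]
and rewrite the denominator by the trick $1-H_S(x)=2-\left(1+H_S(x)\right)$, where $H_S(x)=\sum_{s\in S}x^s$. Pulling out the factor $2^{r+1}$ gives a denominator of the form $\left(1-\tfrac{1+H_S(x)}{2}\right)^{r+1}$, which I would then expand via the negative binomial series $\left(1-u\right)^{-(r+1)}=\sum_{\ell\geq 0}\binom{r+\ell}{\ell}u^\ell$ with $u=\tfrac{1+H_S(x)}{2}$. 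This produces
\[
\sum_{n=0}^\infty \D_{n, S, r}\frac{x^n}{n!}=\frac{r!}{2^{r+1}}\sum_{\ell=0}^\infty\frac{1}{2^\ell}\binom{r+\ell}{\ell}\left(1+H_S(x)\right)^\ell\left(\sum_{s\in S}sx^{s-1}\right)^r.
\]

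Next I would recognize the summand as the function $f_{S,r,\ell}(x)=\left(1+H_S(x)\right)^\ell\left(\sum_{s\in S}sx^{s-1}\right)^r$ introduced just before Equation~\eqref{Sgenfunc}, whose $n$-th Taylor coefficient is, by the cited result from \cite{mihoubi-2017a}, given by $f_{S,r,\ell}^{(n)}(0)=\sum_{k=0}^n\lah{n}{k}_{S,r}(\ell)_k$. Extracting the coefficient of $x^n/n!$ on both sides and interchanging the (absolutely formal) order of summation then yields
\[
\D_{n, S, r}=\frac{r!}{2^{r+1}}\sum_{\ell=0}^\infty\frac{1}{2^\ell}\binom{r+\ell}{\ell}\sum_{k=0}^n\lah{n}{k}_{S,r}(\ell)_k,
\]
which is precisely the claimed identity.

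The main obstacle, if any, is justifying the interchange of the two infinite sums (over $\ell$ and the implicit series expansions) at the level of formal power series; this is routine since for each fixed $n$ only finitely many $\ell$ contribute to the coefficient of $x^n$, as $(\ell)_k$ vanishes for $\ell<k$ and the inner sum is finite. Since the authors explicitly note that the proof is analogous to that of Theorem~\ref{rSfubiniid1}, I would in practice simply invoke that parallel rather than reprove the coefficient-extraction lemma, merely pointing out that $E_S(x)$ is replaced by $H_S(x)=\sum_{s\in S}x^s$ and $g_{S,r,\ell}$ by $f_{S,r,\ell}$ throughout.
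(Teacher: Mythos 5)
Your proof follows exactly the route the paper intends: the authors explicitly state that this theorem's proof is analogous to that of Theorem~\ref{rSfubiniid1}, and your derivation is precisely that analogue, with $E_S(x)$ replaced by $H_S(x)=\sum_{s\in S}x^s$ and $g_{S,r,\ell}$ by $f_{S,r,\ell}$, so the approach and the algebra are correct. One small correction to your closing remark: it is not true that only finitely many $\ell$ contribute to the coefficient of $x^n$ --- $(\ell)_k$ vanishes only for integers $\ell<k$, so for each $k\leq n$ all $\ell\geq k$ contribute nonzero terms; the interchange is instead justified because $\sum_{\ell\geq 0}2^{-\ell}\binom{r+\ell}{\ell}(\ell)_k$ converges absolutely (the geometric factor dominates the polynomial growth), exactly as in the classical identity $F_n=\tfrac{1}{2}\sum_{k\geq 0}k^n/2^k$ that this result generalizes.
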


\section{Acknowledgements}
The authors would like to thank the anonymous referees for carefully reading the paper and giving helpful comments and suggestions.
The third author was partially supported by Universidad Nacional de Colombia, Project No.  46240.

\end{document}